\newtheorem{thm}{Theorem}[section]
\newtheorem{lem}[thm]{Lemma}
\newtheorem{prop}[thm]{Proposition}
\newtheorem{cor}[thm]{Corollary}
\newtheorem{que}{Question}
\newtheorem*{propn}{Proposition}
\newtheorem*{thma}{Theorem A}
\newtheorem*{thmb}{Theorem B}
\newtheorem*{thmc}{Theorem C}
\newtheorem*{thmd}{Theorem D}
\newtheorem*{thme}{Theorem E}
\newcommand{\Aut}{\mathrm{Aut}}
\newcommand{\Inn}{\mathrm{Inn}}
\newcommand{\Core}{\mathrm{Core}}
\newcommand{\Ob}{\mathrm{Ob}}
\newcommand{\ob}{\mathrm{ob}}
\newcommand{\St}{\mathrm{St}}
\newcommand{\Fin}{\mathrm{Fin}}
\newcommand{\Comm}{\mathrm{Comm}}
\newcommand{\OI}{\mathrm{OI}}
\newcommand{\Ilhd}{\mathrm{I}^\lhd}
\newcommand{\bN}{\mathbb{N}}
\newcommand{\bQ}{\mathbb{Q}}
\newcommand{\bZ}{\mathbb{Z}}
\newcommand{\mcA}{\mathcal{A}}
\newcommand{\mcB}{\mathcal{B}}
\newcommand{\mcC}{\mathcal{C}}
\newcommand{\mcF}{\mathcal{F}}
\newcommand{\mcH}{\mathcal{H}}
\newcommand{\mcI}{\mathcal{I}}
\newcommand{\mcJ}{\mathcal{J}}
\newcommand{\mcK}{\mathcal{K}}
\newcommand{\mcN}{\mathcal{N}}
\newcommand{\mcO}{\mathcal{O}}
\newcommand{\mcX}{\mathcal{X}}
\begin{document}

\title{On the structure of just infinite profinite groups}

\author{Colin Reid\\
School of Mathematical Sciences\\
Queen Mary, University of London\\
Mile End Road, London E1 4NS\\
c.reid@qmul.ac.uk}

\maketitle

\begin{abstract}A profinite group $G$ is just infinite if every closed normal subgroup of $G$ is of finite index.  We prove that an infinite profinite group is just infinite if and only if, for every open subgroup $H$ of $G$, there are only finitely many open normal subgroups of $G$ not contained in $H$.  This extends a result recently established by Barnea, Gavioli, Jaikin-Zapirain, Monti and Scoppola in \cite{BGJMS}, who proved the same characterisation in the case of pro-$p$ groups.  We also use this result to establish a number of features of the general structure of profinite groups with regard to the just infinite property.\end{abstract}

\emph{Keywords}: Group theory; profinite groups; just infinite groups

\paragraph{NB} Throughout this paper, we will be concerned with profinite groups as topological groups.  As such, it will be tacitly assumed that subgroups are required to be closed, that homomorphisms are required to be continuous, and that generation refers to topological generation.  When we wish to suppress topological considerations, the word `abstract' will be used, for instance `abstract subgroup'.

\section{Introduction}

\paragraph{Definitions}A profinite group $G$ is \emph{just infinite} if it is infinite, and every non-trivial normal subgroup of $G$ is of finite index.  Say $G$ is \emph{hereditarily just infinite} if every open subgroup of $G$ is just infinite, including $G$ itself.\\

The main theorem of this paper is the following:

\begin{thma}[Generalised obliquity theorem]Let $G$ be an infinite profinite group.  Then the following are equivalent:

(i) $G$ is just infinite;

(ii) The set $\mcK_H = \{K \unlhd_o G \mid K \not\le H\}$ is finite for every open subgroup $H$ of $G$;

(iii) there exists a family $\mcF$ of open subgroups of $G$ with trivial intersection, such that $\mcK_H$ is a finite set for every $H \in \mcF$.\end{thma}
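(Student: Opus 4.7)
The implication (ii)$\Rightarrow$(iii) is immediate: take $\mcF$ to be any neighbourhood base of the identity (for instance the set of all open normal subgroups of $G$), which has trivial intersection by the profinite hypothesis.

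For (iii)$\Rightarrow$(i), I would argue by contraposition. Suppose $N$ is a non-trivial closed normal subgroup of $G$ of infinite index. Since $\bigcap \mcF = 1 \neq N$, there exists $H \in \mcF$ with $N \not\le H$. The quotient $G/N$ is infinite profinite, so has infinitely many distinct open normal subgroups, which pull back to infinitely many open normal subgroups of $G$ containing $N$; each of these must fail to be contained in $H$ (for otherwise $H$ would contain $N$). Hence $\mcK_H$ is infinite.

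The substantive direction is (i)$\Rightarrow$(ii). Assume $G$ is just infinite. I would first reduce to the case $H = M$ open normal, by passing to the core $M := \Core_G(H)$ and noting that $\mcK_H \subseteq \mcK_M$. Now suppose for contradiction that $\mcK_M$ is infinite. Since $G/M$ is finite, the images $KM/M$ take only finitely many non-trivial values as $K$ ranges over $\mcK_M$, so by pigeonhole there is an open normal $L$ with $M < L \le G$ and an infinite subfamily $K_1, K_2, \ldots \in \mcK_M$ all satisfying $K_i M = L$. Let $N := \bigcap_i K_i$. If $N$ had finite index, the finite group $G/N$ would contain the infinitely many distinct normal subgroups $K_i/N$, which is absurd; hence $[G:N] = \infty$, and by just-infiniteness $N = 1$.

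It remains to derive a contradiction from the existence of infinitely many open normal subgroups $K_i \le L$ with $K_i M = L$ and $\bigcap_i K_i = 1$. Since $K_i$ and $M$ are both normal in $L$, the commutator $[K_i, M]$ lies in $K_i \cap M$, and consequently $L/(K_i \cap M)$ splits as an internal direct product $K_i/(K_i \cap M) \times M/(K_i \cap M)$, the first factor being isomorphic to the fixed finite group $F := L/M$, with both factors normal in $G/(K_i \cap M)$; moreover $\bigcap_i (K_i \cap M) = 1$. The plan is to exploit this rigid direct-factor structure, using crucially that each $K_i$ is normal in the whole of $G$ (and not merely in $L$), to extract by a compactness or inverse-limit argument either a closed $G$-invariant complement of $M$ in $L$---which, being isomorphic to the finite $F$, would itself be a non-trivial finite normal subgroup of $G$---or some other non-trivial closed normal subgroup of $G$ of infinite index. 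Either would contradict just-infiniteness. The technical heart of the proof, and its main obstacle, lies in this last step: complements at the finite level $L/(K_i \cap M)$ are typically abundant, so the argument must use the $G$-normality constraint in an essential way and pass coherently to the inverse limit over $i$.
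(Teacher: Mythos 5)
Your reductions are sound as far as they go: (ii)$\Rightarrow$(iii) and (iii)$\Rightarrow$(i) are correct and essentially match the paper, and in (i)$\Rightarrow$(ii) the passage to the core $M$, the pigeonhole on the images $KM/M$, and the conclusion that $\bigcap_i K_i = 1$ (hence $\bigcap_i (K_i \cap M) = 1$) are all valid. But the argument stops exactly where the content of the theorem lies: no contradiction is actually derived from the configuration ``infinitely many $K_i \unlhd_o G$ with $K_i M = L$ and trivial intersection''. The proposed rescue --- assembling the finite-level complements $K_i/(K_i \cap M)$ into a closed $G$-invariant complement of $M$ in $L$ by an inverse-limit argument, which would indeed contradict Wilson's theorem on finite subnormal subgroups --- does not go through as described. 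To run such an argument one needs, for every open normal subgroup $D$ of $G$ contained in $M$, a \emph{single} index $i$ with $K_i \cap M \leq D$ (so that $K_iD/D$ is a $G$-invariant complement to $M/D$ in $L/D$); the hypothesis $\bigcap_i (K_i\cap M)=1$ only guarantees that some \emph{finite intersection} of the subgroups $K_i \cap M$ lies below $D$, and a finite intersection $K_{i_1} \cap \dots \cap K_{i_j}$ may already be contained in $M$, at which point the direct-factor structure is lost. For the same reason one cannot simply intersect the $K_i$ one at a time and apply compactness: the descending chain $K_1 \cap \dots \cap K_j$ may drop into $M$ after finitely many steps. So the ``technical heart'' you flag is a genuine gap, not an unwritten routine verification.

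The paper closes this gap with a different mechanism. Using Wilson's result that a just infinite profinite group has no non-trivial finite subnormal subgroups together with the structure of profinite groups with trivial normal Frattini subgroup (Lemma \ref{philhdlem}), it first shows that every just infinite profinite group is a hereditary $\Phi^\lhd$-group (Corollary \ref{philhdcor}). Finiteness of $K/\Phi^\lhd(K)$ for each open normal $K$ bounds the number of covers of $K$ inside the upward-closed set $\mcK_H$, so K\H{o}nig's lemma extracts an infinite \emph{descending chain} inside $\mcK_H$ (Lemma \ref{phichain}); a compactness argument (Lemma \ref{obchain}) then shows that the intersection of this chain is a closed normal subgroup of infinite index that is still not contained in $H$, contradicting just infiniteness. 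Some substitute for this chain-extraction step appears unavoidable; the pigeonhole and direct-product setup alone does not supply it.
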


This generalises Theorem 36 of \cite{BGJMS}, in which the above theorem is proved under the assumption that $G$ is a pro-$p$ group.\\

It follows from the theorem that just infinite profinite groups have finitely many open subgroups of given index.  This is trivial in the case of just infinite virtually pro-$p$ groups, since they are always finitely generated; but a just infinite profinite group need not be finitely generated in general.\\

To prove results about just infinite profinite groups, we will make use of normal Frattini subgroups, as defined below.  Similar methods are used by Zalesskii in \cite{Zal} to give conditions under which a profinite group has a just infinite image, but the idea is also useful for studying just infinite profinite groups themselves.

\paragraph{Definitions}The \emph{normal Frattini subgroup} $\Phi^\lhd(G)$ of a profinite group $G$ is the intersection of all maximal closed normal subgroups, or equivalently the intersection of the open normal subgroups $N$ such that $G/N$ is simple.  Note that if $G$ is non-trivial then $\Phi^\lhd(G)$ is necessarily a proper subgroup.  We say $G$ is a \emph{$\Phi^\lhd$-group} if $\Phi^\lhd(G)$ has finite index in $G$; say $G$ is a \emph{hereditary $\Phi^\lhd$-group} if every open subgroup of $G$ is a $\Phi^\lhd$-group.\\

In this paper we will give a description of profinite groups $G$ for which $\Phi^\lhd(G)=1$ (Lemma \ref{philhdlem} (ii)), from which it will follow that every just infinite profinite group is a hereditary $\Phi^\lhd$-group.

\paragraph{Definitions}Given a profinite group $G$, define $\Phi^{\lhd n}(G)$ by $\Phi^{\lhd 0}(G) = G$ and thereafter $\Phi^{\lhd (n+1)}(G) = \Phi^\lhd(\Phi^{\lhd n}(G))$.  Define the \emph{$\Phi^\lhd$-height} of a finite group to be the least $n$ such that $\Phi^{\lhd n}(G)=1$.

Let $\mcX$ be a class of finite groups.  The \emph{$\mcX$-residual} $O^\mcX(G)$ of a profinite group $G$ is the intersection of all open normal subgroups $N$ such that $G/N \in \mcX$.  Say $G$ is \emph{residually-$\mcX$} if $O^\mcX(G)=1$.  Given a hereditary $\Phi^\lhd$-group $G$, note that $\Phi^{\lhd n}(G)=O^\mcX(G)$, where $\mcX$ is the class of finite groups of $\Phi^\lhd$-height at most $n$, and that $O^\mcX(G)$ is therefore an open subgroup of $G$.  The \emph{$\mcX$-radical} $O_\mcX(G)$ is the subgroup generated by all subnormal $\mcX$-subgroups.  A \emph{radical} of $G$ is a subgroup which is the $\mcX$-radical of $G$ for some class $\mcX$.\\

Given a profinite group $G$ and subgroup $H$, we define the \emph{oblique core} $\Ob_G(H)$ and \emph{strong oblique core} $\Ob^*_G(H)$ of $H$ in $G$ as follows:
\[ \Ob_G(H) := H \cap \bigcap \{K \unlhd_o G \mid K \not\le H\}\]
\[ \Ob^*_G(H) := H \cap \bigcap \{K \leq_o G \mid H \le N_G(K), \; K \not\le H\}\]
Note that $\Ob_G(H)$ and $\Ob^*_G(H)$ have finite index in $H$ if and only if the relevant intersections are finite.\\

As motivation for the term `generalised obliquity', we recall the following definition:

\paragraph{Definition}[Klaas, Leedham-Green, Plesken \cite{KLP}] Let $G$ be a pro-$p$ group for which each lower central subgroup is an open subgroup. Then the \emph{$i$-th obliquity} of $G$ is given as follows (with the obvious convention that $\log_p(\infty) = \infty$):
\[ o_i(G) := \log_p (|\gamma_{i+1}(G):\Ob_G(\gamma_{i+1}(G))|).\]
The \emph{obliquity} of $G$ is given by $o(G) := \sup_{i \in \bN}o_i(G)$.\\

It is an immediate consequence of Theorem A that if $G$ is a pro-$p$ group such that each lower central subgroup is an open subgroup, then $G$ is just infinite if and only if $o_i(G)$ is finite for every $i$.\\

Theorem A also gives a characterisation of the hereditarily just infinite property: an infinite profinite group $G$ is hereditarily just infinite if and only if $\Ob^*_G(H)$ has finite index for every open subgroup $H$ of $G$.

\paragraph{Definitions}Given a profinite group $G$, let $\Ilhd_n(G)$ denote the intersection of all open normal subgroups of $G$ of index at most $n$.  Now define $\OI_n(G)$ to be $\Ob_G(\Ilhd_n(G))$, and $\OI^*_n(G)$ to be $\Ob^*_G(\Ilhd_n(G))$.  We thus obtain functions $\ob_G$ and $\ob^*_G$ from $\bN$ to $\bN \cup \{ \infty \}$ defined by
\[ \ob_G(n) := |G:\OI_n(G)| \quad ; \quad \ob^*_G(n) := |G:\OI^*_n(G)|. \]
These are respectively the \emph{generalised obliquity function} or \emph{$\ob$-function} and the \emph{strong generalised obliquity function} or \emph{$\ob^*$-function} of $G$.  Given a function $\eta$ from $\bN$ to $\bN$, let $\mcO_\eta$ denote the class of profinite groups for which $\ob_G(n) \leq \eta(n)$ for every $n \in \bN$, and let $\mcO^*_\eta$ denote the class of profinite groups for which $\ob^*_G(n) \leq \eta(n)$ for every $n \in \bN$.\\

These functions give characterisations of the just infinite property and the hereditarily just infinite property in terms of finite images, as described in the following:

\begin{thmb}Let $G$ be a profinite group.  Let $\mcC_\eta$ indicate either $\mcO_\eta$ or $\mcO^*_\eta$.  Then the following are equivalent:

(i) $G$ is finite or a $\mcJ$-group, where $\mcJ$ is the class of just infinite groups if $\mcC_\eta = \mcO_\eta$, or the class of hereditarily just infinite groups if $\mcC_\eta = \mcO^*_\eta$;

(ii) There is some $\eta$ for which $G$ is a $\mcC_\eta$-group;

(iii) There is some $\eta$ for which every image of $G$ is a $\mcC_\eta$-group;

(iv) There is some $\eta$, and some family of normal subgroups $\{N_i \mid i \in I\}$, such that each $G/N_i$ is a $\mcC_\eta$-group and such that $G \cong \varprojlim G/N_i$.\\

Moreover, (ii), (iii) and (iv) are equivalent for any specified $\eta$.\end{thmb}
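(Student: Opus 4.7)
My plan is to prove the ``moreover'' clause first---establishing the equivalence of (ii), (iii), (iv) for a fixed $\eta$---and then to invoke Theorem A to connect these with (i). I will handle the $\mcO_\eta$ and $\mcO^*_\eta$ cases in parallel, since the arguments for the strong oblique core mirror those for the ordinary one once one checks that the normalising condition $H \le N_G(K)$ passes between $G$ and its quotients. In our setting $H = \Ilhd_n(G)$ is normal in $G$, so this check is routine.

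The implication (iii) $\Rightarrow$ (iv) is immediate: take $\{N_i\}$ to be the family of all open normal subgroups of $G$. For (ii) $\Rightarrow$ (iii), given a closed normal subgroup $N$ and the quotient map $\pi : G \to G/N$, I would prove $\pi(\OI_n(G)) \subseteq \OI_n(G/N)$, which forces $\ob_{G/N}(n) \le \ob_G(n)$. The two ingredients are the inclusion $\pi(\Ilhd_n(G)) \subseteq \Ilhd_n(G/N)$ (since every open normal subgroup of $G/N$ of index at most $n$ pulls back to an open normal subgroup of $G$ of index at most $n$, which therefore contains $\Ilhd_n(G)$), and the observation that for every $K' \in \mcK_{\Ilhd_n(G/N)}$ the preimage $\pi^{-1}(K')$ lies in $\mcK_{\Ilhd_n(G)}$.

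The bulk of the work is (iv) $\Rightarrow$ (ii). Using the just-proved (ii) $\Rightarrow$ (iii), I may assume $\{N_i\}$ is filtered by reverse inclusion; in a profinite group such a family with trivial intersection is a base of neighbourhoods of the identity, so every open normal subgroup of $G$ contains some $N_i$. Set $M_i := \pi_i^{-1}(\OI_n(G/N_i))$ and $M := \bigcap_i M_i$; the previous paragraph yields $\OI_n(G) \subseteq M$. The key step is the reverse inclusion. It rests on two points: first, $\bigcap_i \pi_i^{-1}(\Ilhd_n(G/N_i)) = \Ilhd_n(G)$, since $\pi_i^{-1}(\Ilhd_n(G/N_i))$ is the intersection of those open normal subgroups of $G$ of index at most $n$ that contain $N_i$, and every such subgroup contains some $N_i$; second, for each $K \in \mcK_{\Ilhd_n(G)}$ one can choose $i$ large enough that both $N_i \subseteq K$ and $K \not\subseteq \pi_i^{-1}(\Ilhd_n(G/N_i))$, so that $K/N_i \in \mcK_{\Ilhd_n(G/N_i)}$ and hence $M_i \subseteq K$. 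Combining these gives $M \subseteq \OI_n(G)$, so $\OI_n(G) = M$. Finally, $G/M$ is naturally a filtered inverse limit of the finite groups $(G/N_i)/\OI_n(G/N_i)$, each of order at most $\eta(n)$; since a filtered inverse limit of finite groups of bounded order has order at most the same bound (by a straightforward separation argument using directedness), $\ob_G(n) = |G:M| \le \eta(n)$.

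For (i), I apply Theorem A. If $G$ is infinite and $\ob_G(n) < \infty$ for every $n$, then each $\OI_n(G)$ is closed of finite index, hence open, and so is $\Ilhd_n(G) \supseteq \OI_n(G)$; moreover $\mcK_{\Ilhd_n(G)}$ is finite because its members all contain $\OI_n(G)$ and $G/\OI_n(G)$ is finite. The family $\{\Ilhd_n(G)\}_{n \in \bN}$ has trivial intersection, so condition (iii) of Theorem A yields that $G$ is just infinite. Conversely, if $G$ is just infinite then $G$ has only finitely many open subgroups of each given index (as noted after Theorem A), so $\Ilhd_n(G)$ is open and a second application of Theorem A makes $\mcK_{\Ilhd_n(G)}$ finite; setting $\eta(n) := \ob_G(n)$ then gives (ii). The hereditarily just infinite case is identical, using the $\Ob^*$-characterisation following Theorem A. I expect the main obstacle to be the reverse inclusion $M \subseteq \OI_n(G)$ in (iv) $\Rightarrow$ (ii): because $\pi_i^{-1}(\Ilhd_n(G/N_i))$ is in general strictly larger than $\Ilhd_n(G)$ at any single finite stage, detecting each $K \in \mcK_{\Ilhd_n(G)}$ inside some $G/N_i$ demands careful use of directedness of $\{N_i\}$.
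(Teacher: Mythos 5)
Your route is the same as the paper's: the quotient inequality $\OI_n(G)N/N \leq \OI_n(G/N)$ for (ii)$\Rightarrow$(iii), the inverse-limit identity $\OI_n(G) = \bigcap_i \pi_i^{-1}(\OI_n(G/N_i))$ for (iv)$\Rightarrow$(ii), and Theorem A together with Corollary \ref{hjicor} for (i)$\Leftrightarrow$(ii). For the $\mcO_\eta$ case everything you write is correct, and your use of directedness to detect each $K \in \mcK_{\Ilhd_n(G)}$ inside a single finite quotient is exactly the paper's argument.

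There is, however, a genuine gap in the $\mcO^*_\eta$ case of (iv)$\Rightarrow$(ii), which you dismiss as routine on the grounds that $\Ilhd_n(G)$ is normal in $G$. That is not where the difficulty lies. Write $L = \Ilhd_n(G)$ and $M_i = \pi_i^{-1}(\Ilhd_n(G/N_i))$, and let $K \leq_o G$ with $L \leq N_G(K)$ and $K \not\leq L$. To conclude that $\bigcap_i \pi_i^{-1}(\OI^*_n(G/N_i)) \leq K$, you must produce an index $i$ for which $K/N_i$ belongs to the family defining $\Ob^*_{G/N_i}(\Ilhd_n(G/N_i))$, and that requires $K$ to be normalised by $M_i$, not merely by $L$. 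Since $M_i$ is in general strictly larger than $L$ at every finite stage, this does not follow from your hypotheses, and the normality of $L$ in $G$ is irrelevant to it. The paper closes exactly this gap with a compactness argument (Lemma \ref{obchain}): $KL$ is an open subgroup of $G$ containing $L = \bigcap_i M_i$, so some $M_i$ is contained in $KL \leq N_G(K)$; combining that index with one witnessing $K \not\leq M_j$, via directedness, yields a $k$ for which $K/N_k$ is normalised by, but not contained in, $\Ilhd_n(G/N_k)$. With this step inserted your proof is complete; everything else stands.
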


\paragraph{Remark}In more specific contexts, the subgroups $\Ilhd_n(G)$ in the definition of (strong) generalised obliquity functions can be replaced with various other characteristic open series, and Theorem B would remain valid, with essentially the same proof.  For instance, in case of pro-$p$ groups, one could use the lower central exponent-$p$ series, and in the case of prosoluble groups with no infinite soluble images, one could use the derived series.\\

The definitions of $\ob$-functions and $\ob^*$-functions lead to the following general question:

\begin{que}Which functions from $\bN$ to $\bN$ can occur as $\ob$-functions or $\ob^*$-functions for (hereditarily) just infinite profinite groups?  What growth rates are possible?\end{que}

As a straightforward example, consider the significance of linear growth of the $\ob$-function, given a pro-$p$ group $G$.

\begin{propn}Let $G$ be a pro-$p$ group.  The following are equivalent:

(i) $G$ is either $\bZ_p$ or has finite obliquity;
 
(ii) there is a constant $k$ such that $\ob_G(n) \leq kn$ for all $n$.\end{propn}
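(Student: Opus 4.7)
The plan is to prove the two implications around a single monotonicity principle. Since $H_1 \le H_2$ implies $\Ob_G(H_1) \le \Ob_G(H_2)$ (the defining intersection for the larger subgroup runs over a smaller family of $K$'s), and since $\Ilhd_{|G:H|}(G) \le H$ for every open normal $H$, we obtain $\OI_{|G:H|}(G) \le \Ob_G(H)$ and hence
\[
|H : \Ob_G(H)| \leq \frac{\ob_G(|G:H|)}{|G:H|}
\]
for every open normal subgroup $H \le G$.

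For $(ii) \Rightarrow (i)$: the displayed inequality converts $\ob_G(n) \le kn$ into the uniform bound $|H : \Ob_G(H)| \le k$ for every open normal $H$. Theorem B applied with $\eta(n) = kn$ also forces $G$ to be finite (trivially of finite obliquity) or just infinite. In the abelian case one recognises $G = \bZ_p$. In the non-abelian case I would rule out nilpotency, so that every $\gamma_{i+1}(G)$ is non-trivial, hence open by just infiniteness, after which the uniform bound at $H = \gamma_{i+1}(G)$ gives $o_i(G) \le \log_p k$. To rule out nilpotency, suppose $G$ is nilpotent; then the non-trivial $Z(G)$ is open by just infiniteness, so $Z(G) \cong \bZ_p$ (just infiniteness applied to the trivially $G$-acted-on $Z(G)$ forces $\bZ_p$-rank $1$). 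Picking any $u \in G$ whose image lies in $Z(G/Z(G))$ but outside $Z(G)$, the commutator map $\phi(v) := [u,v]$ is a homomorphism $G \to Z(G)$ with non-trivial image, so $G/C_G(u)$ embeds into $\bZ_p$ and is therefore infinite, contradicting the openness of $Z(G) \le C_G(u)$.

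For $(i) \Rightarrow (ii)$: when $G = \bZ_p$ the open normal subgroups form the chain $\{p^k \bZ_p\}_{k \ge 0}$, giving $\Ilhd_n(\bZ_p) = \OI_n(\bZ_p) = p^k \bZ_p$ with $p^k \le n < p^{k+1}$, and hence $\ob_{\bZ_p}(n) \le n$. When $G$ has finite obliquity $o$, I would factor $\ob_G(n) = |G : \Ilhd_n(G)| \cdot |\Ilhd_n(G) : \Ob_G(\Ilhd_n(G))|$ and bound each factor linearly via a structural lemma: for every open normal $L$ the quotient $|L : \gamma_{c+1}(G)|$ is bounded by $p^{o'}$, where $c$ is the nilpotency class of $G/L$ and $o'$ depends only on $o$. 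This lemma lets me replace any open normal $L$ of index $\le n$ by $\gamma_{c+1}(G)$ of index at most $p^{o'} n$, showing $|G : \Ilhd_n(G)| \le p^{o'} n$; applying the lemma once more to $\Ilhd_n(G)$ together with monotonicity bounds the second factor by $p^{2o'}$, giving $\ob_G(n) \le p^{3o'} n$.

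The main obstacle is the structural lemma above, which encodes the slogan that in a finite-obliquity pro-$p$ group the lower central series is cofinal, with bounded index gap, among all open normal subgroups. Its proof uses carefully that $\gamma_c(G) \not\le L$ places $\gamma_c(G)$ inside $\mcK_L$, and couples this with the obliquity hypothesis applied to $\gamma_c(G)$. Everything else reduces to the monotonicity principle and the direct computation in the $\bZ_p$ case.
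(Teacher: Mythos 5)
Your direction (ii) $\Rightarrow$ (i) is sound and is essentially the paper's argument: Theorem B reduces to the finite or just infinite case, the monotonicity observation $\OI_{|G:H|}(G) \le \Ob_G(H)$ converts $\ob_G(n) \le kn$ into $|H:\Ob_G(H)| \le k$ for every open normal $H$, and applying this to $H = \gamma_{i+1}(G)$ bounds the obliquity by $\log_p k$. The only difference is that you prove directly that a just infinite nilpotent pro-$p$ group is $\bZ_p$ (your commutator-homomorphism argument is correct), where the paper simply cites \cite{BGJMS} for the openness of the lower central subgroups. The $\bZ_p$ computation in (i) $\Rightarrow$ (ii) also matches the paper.

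The gap is in the finite-obliquity case of (i) $\Rightarrow$ (ii), where your entire argument rests on the unproved ``structural lemma'' that $|L:\gamma_{c+1}(G)| \le p^{o'}$ for every open normal $L$ with $G/L$ of class $c$. The lemma is in fact true, but the proof sketch you offer does not produce it: both facts you invoke point the wrong way. That $\gamma_c(G) \in \mcK_L$ gives $\Ob_G(L) \le \gamma_c(G)$, an upper bound on $\Ob_G(L)$; and the obliquity hypothesis at $\gamma_c(G)$ gives $\Ob_G(\gamma_c(G)) \le L$ (when $L \not\le \gamma_c(G)$), hence an \emph{upper} bound on $|G:L|$ and so a \emph{lower} bound on $|L:\gamma_{c+1}(G)|$ --- the opposite of what you need. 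A correct proof has to work from the other end: take $i$ maximal with $L \le \gamma_i(G)$, deduce $L \ge \Ob_G(\gamma_{i+1}(G))$ from $L \not\le \gamma_{i+1}(G)$, and then show $\gamma_{i+1+o}(G) \le \Ob_G(\gamma_{i+1}(G))$ (because the lower central series of the finite quotient $G/\Ob_G(\gamma_{i+1}(G))$ must strictly descend through the image of $\gamma_{i+1}(G)$, which has order at most $p^o$); this bounds $c - i$ and hence $|L:\gamma_{c+1}(G)|$ by $|\gamma_i(G):\gamma_{i+o+1}(G)|$, using finite width. The paper sidesteps all of this by invoking the equivalence from \cite{BGJMS}: finite obliquity gives a constant $c$ with $|N:N\cap M| \le p^c$ whenever $M \not\le N$ are normal, so any normal $M \not\le \Ilhd_{p^n}(G)$ fails to contain some normal $K$ of index at most $p^n$ and therefore has index at most $p^{n+c-1}$, whence $\OI_{p^n}(G) \ge \Ilhd_{p^{n+c-1}}(G)$; combined with the easy bound $|G:\Ilhd_{p^m}(G)| \le p^{m+w+o-1}$ via the first lower central subgroup of index at least $p^m$, this gives the linear bound directly. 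You should either supply a proof of your lemma along the lines above or switch to the paper's route; as written, the key step is asserted rather than proved.
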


Another example is that of self-reproducing branch groups, in the sense of \cite{GriNH}; in particular, such a group $G$ has an open subgroup that is isomorphic to a direct product of copies of $G$.  Here, self-similarity properties can be used to obtain a bound on the growth rate of the obliquity function.

\begin{propn}Let $G$ be a just infinite profinite branch group that is self-reproducing at some vertex (see later).  Then there is a constant $c$ such that $\ob_G(n) \leq c^n$ for all $n$.\end{propn}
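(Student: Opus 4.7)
My approach is to iterate the self-reproduction at the distinguished vertex $v$ to build a descending chain of open characteristic subgroups of $G$, each isomorphic to a direct power of $G$, and then to use the just-infinite property together with Theorem~A to control how small-index normal subgroups intersect this chain.

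Set $L := \mathrm{rist}_G(v) \cong G$, and let $s$ be the cardinality of the $G$-orbit $G\cdot v$. The subgroup $R_1 := \prod_{u \in G\cdot v}\mathrm{rist}_G(u)$ is an internal direct product by the branch property, hence $R_1 \cong G^s$; it is open and characteristic in $G$, with $[G:R_1]$ equal to some finite $t$. Iterating self-reproduction inside each direct factor (each of which is itself isomorphic to $G$, carrying its own self-reproducing vertex) yields a descending chain $R_1 > R_2 > \cdots$ of open characteristic subgroups, where $R_k \cong G^{s^k}$ and $[G:R_k] \leq t^{(s^k-1)/(s-1)}$. Bounding this crudely, $[G:R_k] \leq c_0^{s^k}$ for some constant $c_0$ depending only on $G$.

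Given $n$, I would choose $k = k(n)$ to be the least integer with $s^k$ of order $\log n$ (so that $[G:R_k] = O(n^s)$), and then show that $R_k \leq K$ for every open normal subgroup $K$ of $G$ with $[G:K] \leq n$. The argument uses that $K \cap R_k$ is normal of index at most $n$ in $R_k \cong G^{s^k}$: by the just-infinite property each projection to a direct factor is either the whole factor or an open normal subgroup of $G$ of index at most $n$, and Theorem~A gives only finitely many options, all of bounded index. A pigeonhole argument on the $s^k$ factors, combined with the transitive permutation of factors by $G/R_k$, then forces $K\cap R_k = R_k$. Hence $R_{k(n)} \leq \Ilhd_n(G)$. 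Since the set $\mcK_{\Ilhd_n(G)}$ is finite by Theorem~A, its members also contain $R_{k'}$ for some $k' = k(n)+O(1)$ by the same argument, so $R_{k'} \leq \Ob_G(\Ilhd_n(G))$, giving $\ob_G(n) \leq [G:R_{k'}] \leq c_0^{s^{k'}} \leq c^n$ for a suitable constant $c$.

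The main obstacle is the second step: normal subgroups of the direct power $G^{s^k}$ of a just-infinite group need not decompose as products of normal subgroups in the factors, as they may sit diagonally across several factors. The careful combination of Theorem~A (which bounds the number of open normal subgroups of $G$ of given index) with the $G$-action permuting the factors of $R_k$ is what allows one to exclude such diagonals once $s^k$ exceeds the available index budget, forcing the clean containment $R_k \leq K$ at a uniform level $k(n)$.
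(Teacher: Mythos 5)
There are genuine gaps here, and at least one is fatal to the strategy. A preliminary point: the self-reproducing hypothesis says that $U^G_v$ (the group induced on $T_v$ by the stabiliser of $v$) is isomorphic to $G$, not that the rigid stabiliser $L^G_v = \mathrm{rist}_G(v)$ is; in a branch group $L^G_v$ is merely open in $U^G_v$, so your $L$ is only commensurable with $G$ and the clean identification $R_k \cong G^{s^k}$ is unjustified as stated. That might be repairable, but the central step is not. Knowing that $K \cap R_k$ is normal of index at most $n$ in $R_k \cong G^{m}$ and (by transitivity of the factor permutation plus index counting) projects onto every direct factor does \emph{not} force $K \cap R_k = R_k$. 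Whenever $G$ has a non-trivial finite abelian quotient $G/N$ --- as the profinite Grigorchuk group and most branch groups do --- the kernel of $G^m \to G/N$, $(g_1,\dots,g_m) \mapsto \bar g_1 \cdots \bar g_m$, is a permutation-invariant normal subgroup of $G^m$ of index $|G/N|$ that surjects onto every factor, and that index is independent of $m$. So increasing $s^k$ past any ``index budget'' never excludes these diagonals; you correctly identify this as the main obstacle, but the proposed resolution does not exist.

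A further gap: even granting $R_{k(n)} \leq \Ilhd_n(G)$, passing to $\Ob_G(\Ilhd_n(G))$ requires controlling normal subgroups $K$ of arbitrarily large index that merely fail to lie inside $\Ilhd_n(G)$; ``the same argument'' is unavailable there because it used the hypothesis $|G:K| \leq n$ to bound $|R_k : K \cap R_k|$. This is exactly where the paper's proof does its real work, and by a different route: it defines $f(m)$ by $\Ob_G(\St_G(m)) \geq \St_G(m+f(m))$, uses self-reproduction at $v$ to prove $f(k+m) \leq \max\{f(k),f(m)\}$ and hence that $f$ is bounded by a constant $r$, so that every normal subgroup not contained in $\St_G(m)$ contains $\St_G(m+r)$; the exponential bound $c^n$ then comes from embedding $\St_G(l)/\St_G(l+2r+1)$ into a product of boundedly-sized finite groups indexed by the (at most $n$) vertices at level $l$. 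Your proposal has no substitute for this bounded-obliquity-along-level-stabilisers step, which is the heart of the matter.
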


We also consider how the $\ob$-function and $\ob^*$-function of a just infinite profinite group $G$ relate to those of its open normal subgroups.

\begin{thmc}Let $G$ be a just infinite profinite group, and let $H$ be a subgroup of $G$ of index $h$.
  
(i) The following inequality holds for sufficiently large $n$:
\[ h^{-1} \ob_G(hn) \leq \ob_H(n). \]

(ii) Let $t=|G:\Core_G(H)|$. The following inequality holds for all $n$:
\[\ob^*_H(n) \leq h^{-1}\ob^*_G(tn^h).\]

(iii) For a given $n$, let $\mcI_n$ be the set of subgroups of $G$ containing $\Ilhd_n(G)$.  The following inequality holds:
\[\ob^*_G(n) \leq \prod_{L \in \mcI_n} |G:L|\ob_L(n).\]\end{thmc}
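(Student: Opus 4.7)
The plan for each of the three parts is to translate the claimed index inequality into an inclusion of open subgroups, specifically between the relevant oblique cores. Theorem~A is the principal tool: it ensures every oblique core is an intersection of only finitely many open normal subgroups, hence open, so all indices are finite and the bookkeeping is tractable.

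For part (ii), the target inclusion is $\OI^*_G(tn^h) \le \OI^*_n(H)$ inside $G$, whence taking $G$-indices and dividing by $h$ yields the claim. Two ingredients are needed. First, $\Ilhd_{tn^h}(G) \le \Ilhd_n(H)$: for any $N \unlhd_o H$ with $|H:N|\le n$, $N$ has at most $|G:N_G(N)| \le h$ conjugates in $G$, so $\Core_G(N)$ is the intersection of at most $h$ subgroups each of $H$-index $n$, hence normal in $G$ of index at most $hn^h \le tn^h$; thus $\Ilhd_{tn^h}(G) \le \Core_G(N) \le N$, and intersecting over all such $N$ gives the containment. Second, each $L \unlhd_o H$ with $L \not\le \Ilhd_n(H)$ appears in the intersection defining $\OI^*_G(tn^h)$: $L$ is open in $G$; $\Ilhd_{tn^h}(G) \le \Core_G(H) \le H \le N_G(L)$, the first containment using $tn^h \ge t$; and $L \not\le \Ilhd_{tn^h}(G)$ follows from the first ingredient. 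Intersecting over these $L$ then gives the inclusion.

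For part (i), the target is $\OI_n(H) \le \OI_{hn}(G)$ for sufficiently large $n$; once $\OI_{hn}(G) \le H$ (which holds eventually), the index inequality follows by dividing by $h$. By Theorem~A, $\OI_{hn}(G) = \Ilhd_{hn}(G) \cap N_1 \cap \cdots \cap N_r$ for finitely many $N_i \unlhd_o G$, so it suffices to contain $\OI_n(H)$ in each factor. For any $N_i$, if $N_i \cap H \not\le \Ilhd_n(H)$ then $N_i \cap H$ appears in the intersection defining $\OI_n(H)$, so $\OI_n(H) \le N_i \cap H \le N_i$ immediately. The observation that makes this case hypothesis hold for large $n$ is that each $N_i \cap H$ is a fixed non-trivial open subgroup of $H$, while the chain $\Ilhd_n(H)$ is decreasing with trivial intersection, so for $n$ large enough the containment $\Ilhd_n(H) \subsetneq N_i \cap H$ becomes strict. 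An analogous argument applied to the single open subgroup $\Ilhd_{hn}(G)$ of $G$ (whose intersection with $H$ is a fixed non-trivial open subgroup of $H$) handles the remaining factor.

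For part (iii), the plan is to prove $\OI^*_n(G) \supseteq \bigcap_{L \in \mcI_n} \OI_n(L)$; the stated index bound follows using $|G:\bigcap A_i| \le \prod_i |G:A_i|$ together with $|G:L| \cdot \ob_L(n) = |G:\OI_n(L)|$. Write $M$ for the intersection; taking $L = G \in \mcI_n$ gives $M \le \OI_n(G) \le \Ilhd_n(G)$, so $M$ lies in the $\Ilhd_n(G)$-factor of $\OI^*_n(G)$. For any $K$ contributing to the intersection defining $\OI^*_n(G)$, set $L_K := K \cdot \Ilhd_n(G) \in \mcI_n$, so that $K \unlhd L_K$; the easy case is $K \not\le \Ilhd_n(L_K)$, where $K$ appears in the intersection for $\OI_n(L_K)$ and $M \le \OI_n(L_K) \le K$ follows. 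The central technical obstacle is the complementary case $K \le \Ilhd_n(L_K)$: the plan is to note that $|L_K:\Ilhd_n(G)| \le n$ would force $\Ilhd_n(L_K) \le \Ilhd_n(G)$ (as $\Ilhd_n(G)$ would then be an open normal subgroup of $L_K$ of index at most $n$) and hence $K \le \Ilhd_n(G)$, contradicting the hypothesis on $K$; the remaining sub-case $|L_K:\Ilhd_n(G)| > n$ is the main difficulty and will require replacing $L_K$ with a more carefully chosen $L \in \mcI_n$ in which $K$ is still normal but no longer contained in $\Ilhd_n(L)$, so that the easy case applies.
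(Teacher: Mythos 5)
Your overall strategy --- reducing each index inequality to a containment of oblique cores and invoking Theorem A for finiteness --- is the right one and your target containments agree with the paper's, but each part has a genuine gap. The most serious is in part (i): the claim that ``each $N_i \cap H$ is a fixed non-trivial open subgroup of $H$'' is false, because the family $\{N_i\}$ consists of the open normal subgroups of $G$ not contained in $\Ilhd_{hn}(G)$, a family that grows without bound as $n$ does; likewise $\Ilhd_{hn}(G)$ itself is not a fixed subgroup. So the ``for $n$ large enough'' step has no uniform justification. The missing ingredient is the comparison $\Ilhd_n(H) \leq \Ilhd_{hn}(G)$ for sufficiently large $n$, which the paper isolates as a lemma and which requires a real argument: by Theorem A all but finitely many $N \unlhd_o G$ lie in $H$, and for those $|H:N| = |G:N|/h$, while the finitely many exceptions (together with the factor $\Ilhd_{hn}(G)$ itself) are absorbed by $\Ob_G(H)$, which contains $\Ilhd_{hn}(G)$ for large $n$. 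With that containment, your case hypothesis follows for $N_i \leq H$ since $N_i = N_i \cap H \leq \Ilhd_n(H) \leq \Ilhd_{hn}(G)$ would contradict $N_i \not\leq \Ilhd_{hn}(G)$; the $N_i \not\leq H$ are handled by $\Ob_G(H)$.

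In part (iii), the ``main difficulty'' you leave open does not exist: for \emph{any} subgroup $L$ of $G$ one has $\Ilhd_n(L) \leq \Ilhd_n(G) \cap L$, because every $N \unlhd_o G$ of index at most $n$ meets $L$ in an $L$-normal subgroup of index at most $n$. Hence $K \leq \Ilhd_n(L_K)$ would force $K \leq \Ilhd_n(G)$, contrary to hypothesis, so your ``easy case'' always applies and no sub-case analysis on $|L_K:\Ilhd_n(G)|$ is needed. In part (ii) there are two smaller slips. First, the conjugates $N^g$ are subgroups of $H^g$, not of $H$, so your index count is not valid as written; the paper first replaces $N$ by $N \cap \Core_G(H)$, whose conjugates all lie in $\Core_G(H)$, giving the bound $tn^h$ cleanly. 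Second, you only verify that $L$ appears in the intersection defining $\OI^*_{tn^h}(G)$ when $L \unlhd_o H$, whereas the intersection defining $\OI^*_n(H)$ runs over all open $L \leq H$ normalised merely by $\Ilhd_n(H)$; your chain $H \leq N_G(L)$ fails for those, but the chain $\Ilhd_{tn^h}(G) \leq \Ilhd_n(H) \leq N_H(L) \leq N_G(L)$ works for all of them.
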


Another consequence of generalised obliquity concerns non-abelian normal sections of a just infinite profinite group $G$.  In sharp contrast to the case of abelian normal sections, a given isomorphism type of non-abelian finite group can occur only finitely many times as a normal section of $G$.  In fact, more can be said here.

\begin{thmd}Let $G$ be a just infinite profinite group.\\

(i) Let $\mcF$ be a class of non-abelian finite groups, and let $\mcA$ be the class of groups $A$ satisfying $\Inn(F) \leq A \leq \Aut(F)$ for some $F \in \mcF$.  Suppose there are infinitely many pairs $(M,N)$ of normal subgroups of $G$ such that $N \leq M$ and $M/N \in \mcF$.  Then either $G$ is residually-$\mcA$, or it has an open normal subgroup that is residually-$\mcF$.  In particular, at least one of $\mcA$ and $\mcF$ contains groups of arbitrarily large $\Phi^\lhd$-height, and hence $\mcF$ must contain infinitely many isomorphism classes.\\

(ii) Suppose $G$ is not virtually abelian, and let $H$ be a proper open normal subgroup of $G$.  Then $G$ has only finitely many normal subgroups $K$ such that $K/\Phi^\lhd(K') \cong H/\Phi^\lhd(H')$.  In particular, $G$ has only finitely many normal subgroups that are isomorphic to $H$.\end{thmd}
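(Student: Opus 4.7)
My plan is to prove (i) by a dichotomy built on the conjugation action on each section $M/N$, and then deduce (ii) by applying (i) to the singleton class $\mcF = \{T(H)\}$, where $T(H) := H/\Phi^\lhd(H')$.

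For (i), each pair $(M, N)$ gives a homomorphism $G \to \Aut(M/N)$ with kernel $C_{M,N} := C_G(M/N)$ whose image contains $\Inn(M/N)$ (since $M$ surjects onto it), so $G/C_{M,N} \in \mcA$. Set $C := \bigcap C_{M,N}$ over all pairs in the infinite family. If $C = 1$, then $G$ is residually-$\mcA$ and we are done, so assume $C \neq 1$. Then $C$ is open, and for every pair $[C, M] \le N$; in particular $M \le C$ would force $M/N$ abelian, contradicting $M/N \in \mcF$. Hence $M \not\le C$ for every pair, and Theorem~A applied with $C$ limits $M$ to finitely many values. Pigeonhole produces a single $M^*$ occurring in infinitely many pairs $(M^*, N_j)$. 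The intersection $\bigcap_j N_j$ is normal in $G$; if non-trivial, it would be open, leaving only finitely many open normals of $G$ between it and $M^*$ and contradicting the infinitude of the $N_j$. Hence $\bigcap_j N_j = 1$ and $M^*$ is an open normal subgroup of $G$ that is residually-$\mcF$.

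For the ``In particular'' clause, recall from the introduction that every just infinite profinite group is a hereditary $\Phi^\lhd$-group, so $\Phi^{\lhd n}(G)$ is open, and hence non-trivial, for every $n$, and the same is true of each of its open subgroups. If both $\mcA$ and $\mcF$ had bounded $\Phi^\lhd$-height $h$, then Case~A would force $\Phi^{\lhd h}(G) = 1$ and Case~B would force $\Phi^{\lhd h}(L) = 1$ for the open subgroup $L$ produced, each contradicting openness. Hence one of $\mcA, \mcF$ is unbounded; since finitely many isomorphism classes in $\mcF$ would bound both classes (as $\mcA$ lies in the subgroup lattices of the finitely many $\Aut(F)$'s), $\mcF$ must contain infinitely many isomorphism classes.

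For (ii): since $G$ is not virtually abelian, $H' \neq 1$ is open, and so $\Phi^\lhd(H') < H'$; thus $(T(H))' = H'/\Phi^\lhd(H') \neq 1$ and $T(H)$ is non-abelian. If infinitely many normal subgroups $K$ of $G$ satisfied $K/\Phi^\lhd(K') \cong T(H)$, then each is non-trivial (hence open), and $\Phi^\lhd(K')$, being characteristic in $K'$, is normal in $G$; the pairs $(K, \Phi^\lhd(K'))$ would form an infinite family satisfying the hypotheses of (i) with $\mcF = \{T(H)\}$. But the ``In particular'' clause of (i) then demands that $\mcF$ contain infinitely many isomorphism classes, contradicting that it is a singleton. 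The final sentence about $K \cong H$ is immediate, since $K \mapsto K/\Phi^\lhd(K')$ is functorial under isomorphism.

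The main obstacle I expect is Case~B of (i): after isolating $C$, one must simultaneously control both entries of the infinite family of pairs, and the essential insight is that the non-abelian hypothesis rules out $M \le C$ and thereby, via Theorem~A, confines $M$ to finitely many values, reducing the residual task to a pigeonhole argument on the $N_j$.
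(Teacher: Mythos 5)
Your proof is correct and follows essentially the same route as the paper's: the dichotomy via the centralisers $C_G(M/N)$, finiteness of the possible $M$ via Theorem A, pigeonhole on $M$, triviality of $\bigcap_j N_j$ by just-infiniteness, and deduction of (ii) by specialising $\mcF$ to the singleton class of $H/\Phi^\lhd(H')$. The only difference is cosmetic: where you apply Theorem A directly to the open subgroup $C = \bigcap C_G(M/N)$, the paper routes through the quantitative proposition at the start of Section 4, which bounds $|G:M|$ by $\ob_G(\ob_G(h))$ via the oblique core $\Ob_G(\Ob_G(H))$.
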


\paragraph{Definition}We say a profinite group $G$ is \emph{index-unstable} if it has a pair of isomorphic open subgroups of different indices, and \emph{index-stable} otherwise.\\

Any just infinite virtually abelian profinite group $G$ is virtually a free abelian pro-$p$ group for some $p$, and hence $G$ is index-unstable.  On the other hand, given part (ii) of Theorem D it seems to be difficult to construct just infinite profinite groups which are index-unstable but not virtually abelian.  We consider the following question:

\begin{que}\label{indexq}Let $G$ be a (hereditarily) just infinite profinite group which is index-unstable.  Is $G$ necessarily virtually abelian?\end{que}

This question is also motivated by the study of commensurators of profinite groups, in the sense of Barnea, Ershov and Weigel (\cite{BEW}).  The relevant definitions will be recalled briefly later, but the reader should consult \cite{BEW} for a detailed account.  Although Question \ref{indexq} remains open, we do obtain some results in this direction.  In particular, the following will be shown:

\begin{thme}Let $G$ be a just infinite profinite group.\\

(i) Suppose there is a proper open subgroup $H$ of $G$ isomorphic to $G$ itself.  Then $G$ is virtually abelian.\\

(ii) Suppose $G$ has infinitely many distinct radicals.  Then $G$ is index-stable.\end{thme}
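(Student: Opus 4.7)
For (i), I propose arguing by contradiction using Theorem D (ii). Suppose $G$ is not virtually abelian; then for any proper open normal subgroup $M$ of $G$ there are only finitely many normal subgroups of $G$ isomorphic to $M$. I will derive a contradiction by exploiting the isomorphism $\psi \colon G \to H$ (viewed as an injective endomorphism of $G$, injective because $G$ is Hopfian: any non-trivial kernel would be a closed normal subgroup of finite index, forcing $H = \psi(G)$ to be finite). Set $h = |G:H|$ and $N = \Core_G(H)$, a proper open normal subgroup of $G$ contained in $H$. The key observation is that whenever $K \lhd G$ with $K \le H$ we have $K \lhd H$, so $\psi^{-1}(K)$ is a normal subgroup of $G$ isomorphic to $K$ but of strictly smaller index, $|G:\psi^{-1}(K)| = |H:K| = |G:K|/h$.

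To iterate this step past the obstruction that $\psi^{-1}(K)$ need not lie in $H$, I would work with the descending chain $H^{(n)} := \psi^n(G)$. For each $n$, set $L_n := \Core_G(\psi^n(N))$, a proper open normal subgroup of $G$ (open because cores of open subgroups are open) contained in $\psi^n(N) \cong N$. Pulling back under $\psi^{-k}$ for $0 \le k \le n$ yields $n+1$ distinct normal subgroups of $G$ — distinctness from the pairwise distinct indices $|G:L_n|/h^k$ — each isomorphic to $L_n$. Applying Theorem D (ii) to $L_n$ bounds the number of normal subgroups of $G$ isomorphic to $L_n$ by some finite quantity $f(L_n)$; if infinitely many $n$ yield $L_n$ of a common isomorphism type $L$, then for $n$ sufficiently large in that subsequence we obtain more than $f(L)$ normal subgroups of $G$ isomorphic to $L$, the desired contradiction. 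The main obstacle is showing that the isomorphism types of the $L_n$ do stabilize along some subsequence: each $L_n$ embeds as a finite-index subgroup of $N$, but these types range in a countable set and need not be eventually constant a priori, so some additional structural control — perhaps exploiting how $\Core_G$ transforms under conjugation, or an inductive argument on $\Phi^\lhd$-height using that $G$ is a hereditary $\Phi^\lhd$-group — will be required to force the desired pigeonhole.

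For (ii), I argue the contrapositive: if $G$ is index-unstable then $G$ has only finitely many distinct radicals. Fix an isomorphism $\phi \colon H_1 \to H_2$ of open subgroups of $G$ with $|G:H_1| \ne |G:H_2|$. For any class $\mcX$ of finite groups, $O_\mcX(H_1)$ is characteristic in $H_1$, so $\phi$ sends it to $O_\mcX(H_2)$ and the common quotient index $d_\mcX := |H_i:O_\mcX(H_i)|$ is well-defined. The plan is to relate $O_\mcX(G)$ to $O_\mcX(H_i)$ via the inclusion $O_\mcX(H) \le O_\mcX(G) \cap H$ (which holds when $H$ is subnormal in $G$, a condition that can be arranged by passing to $\Core_G(H_i)$), extracting a constraint on $|G:O_\mcX(G)|$ depending only on $d_\mcX$ and on the fixed indices $|G:H_i|$. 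Combined with Theorem A — which implies that $G$ has only finitely many open normal subgroups of any given bounded index — this will force $O_\mcX(G)$ to take only finitely many values as $\mcX$ varies, hence only finitely many distinct radicals. The chief obstacle is the subtle relationship between radicals of $G$ and those of its open subgroups: the inclusion $O_\mcX(H) \le O_\mcX(G) \cap H$ may be strict, subnormality in $G$ differs from subnormality in $H$, and a careful subnormal-closure argument will be required to turn the equality of indices forced by $\phi$ into a genuine bound on $O_\mcX(G)$.
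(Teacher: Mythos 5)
Both halves of your plan stall at the point you yourself flag, and in each case the missing ingredient is the same: you need to pin down an isomorphism type of open normal subgroup whose representatives are forced to stay where you want them, rather than chasing images around and hoping their isomorphism types repeat. For (i), there is no reason the types of $L_n=\Core_G(\psi^n(N))$ should stabilise along a subsequence, and control of $\Phi^\lhd$-height does not obviously supply one. The paper avoids the iteration entirely: by Corollary \ref{vaiso} (a consequence of Theorem D(ii)) a non-virtually-abelian just infinite $G$ has infinitely many isomorphism types of open normal subgroup, while by Theorem A only finitely many open normal subgroups of $G$ fail to lie in $H$; hence some isomorphism type has \emph{all} of its $G$-normal representatives inside $H$. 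Take $N$ of least index among normal subgroups of $G$ of that type and let $\theta\colon H\to G$ be the isomorphism. Then $N\lhd H$, so $N^\theta\lhd G$ with $N^\theta\cong N$ and $|G:N^\theta|=|G:N|/|G:H|<|G:N|$, contradicting minimality. One step, no pigeonhole.

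For (ii), your contrapositive via $O_\mcX(H_1)\cong O_\mcX(H_2)$ runs into exactly the obstacle you name: the containments between $O_\mcX(G)$ and $O_\mcX(H_i)$ only go one way and can be strict, so equality of the indices $|H_i:O_\mcX(H_i)|$ does not yield a bound on $|G:O_\mcX(G)|$, and I do not see how to close this. The paper instead observes that radicals of $G$ itself are ``self-locating'': if $N=O_\mcX(G)$ is non-trivial and $M$ is any subnormal subgroup of $G$ with $M\cong N$, then $M$ is generated by its subnormal $\mcX$-subgroups, these are subnormal in $G$ and hence lie in $N$, so $M\le N$. It follows both that distinct non-trivial radicals are pairwise non-isomorphic and that $j_N(G)=1$ for every non-trivial radical $N$; infinitely many radicals then feed into Proposition \ref{jbdprop} with $k=1$ to give index-stability. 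So the statement you should be aiming for is not a direct bound on the number of radicals under index-instability, but the containment $M\le N$ above, which is a one-line consequence of the definition of $O_\mcX$ together with the transitivity of subnormality.
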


\paragraph{Remarks}If $G$ is just infinite profinite group that is virtually abelian, it can easily be shown that $G$ is isomorphic to a proper open subgroup of itself if and only if $G$ is a split extension of a free abelian pro-$p$ group for some $p$.\\

Part (ii) of Theorem E is not vacuous, as there are certainly just infinite profinite groups that have infinitely many distinct radicals.  For instance, Ershov (\cite{Ers}) has proved that for $p > 3$, the Nottingham group $J_p$ over the field of $p$ elements satisfies $\Comm(J_p) \cong \Aut(J_p)$; since $J_p$ is hereditarily just infinite, it follows that every characteristic subgroup of $J_p$ is a radical.

\section{The generalised obliquity theorem and some consequences}

\paragraph{Definitions}The \emph{finite radical} $\Fin(G)$ of a profinite group $G$ is the union of all finite normal subgroups of $G$.  This is an abstract subgroup of $G$, though it need not be closed.

\begin{lem}[Wilson, Corollary 3.8 (a) of \cite{WilNH}]Let $G$ be a just infinite profinite group.  Then $G$ has no non-trivial finite subnormal subgroups.\end{lem}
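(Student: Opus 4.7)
The plan is to prove, by induction on subnormal depth $k$, that any finite subnormal subgroup $N \leq G$ of depth $k$ has finite normal closure $N^G$. Once this is established, if $N$ were a non-trivial finite subnormal subgroup, then $N^G$ would be a non-trivial finite closed normal subgroup of $G$, which by just infiniteness would have finite index, forcing $G$ to be finite --- contradicting the infiniteness of $G$.

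The base case $k=1$ is immediate since $N^G = N$. For the inductive step, fix a chain $N \lhd N_1 \lhd \cdots \lhd N_{k-1} \lhd N_k = G$. Applying the inductive hypothesis inside $N_{k-1}$ produces a finite normal subgroup $M := N^{N_{k-1}}$ of $N_{k-1}$ containing $N$; it now suffices to show that $M^G$ is finite. Since $M$ is finite, $C_G(M) = \bigcap_{m \in M} C_G(m)$ is a finite intersection of open subgroups of $G$, hence open, and so $N_G(M) \geq C_G(M)$ is open too. Thus $M$ has only finitely many $G$-conjugates $M_1, \ldots, M_r$. Because $N_{k-1} \lhd G$, every conjugate $M_i = gMg^{-1}$ lies in $N_{k-1}$; and for any $n \in N_{k-1}$, writing $n M_i n^{-1} = g\bigl((g^{-1}ng)M(g^{-1}ng)^{-1}\bigr)g^{-1}$ and using that $g^{-1}ng \in N_{k-1} \leq N_G(M)$, we see $M_i \lhd N_{k-1}$. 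Hence the $M_i$ pairwise permute as subgroups of $N_{k-1}$, so $M^G = M_1 M_2 \cdots M_r$ is a product of finitely many finite subgroups, and is therefore finite.

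This argument is essentially Wielandt's theorem on finite subnormal subgroups, adapted to the profinite setting; I do not anticipate a serious obstacle, since the only topological input required --- the openness of centralizers of finite subgroups --- is automatic in a profinite group.
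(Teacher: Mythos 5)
There is a genuine gap, and it sits at the one point where your argument pretends to get finiteness for free. (Note also that the paper itself contains no proof of this lemma --- it is quoted from Wilson \cite{WilNH}, Corollary 3.8(a) --- so your argument has to stand entirely on its own.) You assert that $C_G(M)=\bigcap_{m\in M}C_G(m)$ is open because centralisers of elements of a profinite group are open. They are not: $C_G(m)$ is closed, and it is open precisely when $m$ has a finite conjugacy class. For example, in a Cartesian product of infinitely many copies of a finite non-abelian simple group, an element that is non-trivial in every coordinate has centraliser of infinite index. Worse, the failure is unavoidable in exactly the situation the lemma addresses: as recalled in Section 4 of this paper, a just infinite profinite group that is not virtually abelian has $VZ(G)=1$, i.e.\ \emph{every} non-trivial element has an infinite conjugacy class, so no centraliser of a non-trivial finite subgroup is open there. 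Since this claimed openness is your only source for the finiteness of the set of $G$-conjugates of $M$, the inductive step collapses.

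The defect is not merely local, so no small patch will save the strategy. Your induction requires the statement ``a finite subnormal subgroup of defect $k$ has finite normal closure'' for the ambient groups $N_{k-1}$, which are arbitrary closed subnormal subgroups of $G$ and need not be just infinite; and for general groups, profinite or abstract, that statement is false already at defect $2$. In the lamplighter group $C_2\wr\mathbb{Z}$, with base group $B=\bigoplus_{i\in\mathbb{Z}}\langle a_i\rangle$ and shifting generator $t$, the subgroup $\langle a_0\rangle$ is finite and satisfies $\langle a_0\rangle\lhd B\lhd G$, yet its normal closure is all of the infinite group $B$; passing to the profinite completion and replacing $B$ by its closure gives a profinite example in which all subgroups involved are closed, $\langle a_0\rangle$ has infinitely many conjugates, and its closed normal closure is infinite. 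In particular, the ``Wielandt theorem'' you appeal to (finite subnormal implies finite normal closure) is not a theorem for infinite groups, and the just infinite hypothesis cannot be postponed to the final contradiction: it must be used to control the conjugates or the normal closure of the subnormal subgroup itself, which is precisely what Wilson's cited argument does through his analysis of subnormal subgroups of just infinite groups, and what compactness alone does not give you.
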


\begin{lem}\label{philhdlem}(i) Let $G$ be a profinite group, and let $H \unlhd G$.  Then $\Phi^\lhd(H) \leq \Phi^\lhd(G)$.\\

(ii) Let $G$ be a profinite group such that $\Phi^\lhd(G)$ is trivial.  Then $G$ is a Cartesian product of elementary abelian groups and non-abelian finite simple groups.  In particular, $\Fin(G)$ is dense in $G$.\end{lem}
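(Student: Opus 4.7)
Part (i) admits a direct argument from the definition $\Phi^\lhd(G) = \bigcap\{N \unlhd_o G \mid G/N \text{ simple}\}$. Given $H \unlhd G$ and such an $N$, the image $HN/N$ is a normal subgroup of the simple group $G/N$, so either $H \leq N$ (and then $\Phi^\lhd(H) \leq H \leq N$) or $HN = G$. In the latter case $H/(H \cap N) \cong G/N$ is simple, so $H \cap N$ is a maximal open normal subgroup of $H$ with simple quotient, yielding $\Phi^\lhd(H) \leq H \cap N \leq N$. Intersecting over all such $N$ gives the desired inclusion.

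For part (ii), the plan is to organize the maximal open normal subgroups of $G$ according to the isomorphism type of their simple quotient. For each isomorphism type $T$ of a finite simple group, let $R_T := \bigcap\{N \unlhd_o G \mid G/N \cong T\}$; the hypothesis $\Phi^\lhd(G) = 1$ is precisely $\bigcap_T R_T = 1$. I first analyze each $G/R_T$ in isolation. When $T = \bZ/p$, $G/R_T$ is residually $\bZ/p$, hence elementary abelian pro-$p$, hence (by the standard duality with discrete $\bF_p$-vector spaces) a Cartesian power of $\bZ/p$. When $T$ is a non-abelian finite simple group $S$, I argue by induction that any $k$ distinct maximal open normal subgroups $N_1,\dots,N_k$ of $G$ with $G/N_i \cong S$ satisfy $G/(N_1 \cap \cdots \cap N_k) \cong S^k$: if $N_k \cdot (N_1 \cap \cdots \cap N_{k-1})$ were a proper subgroup then $N_1 \cap \cdots \cap N_{k-1} \leq N_k$, and the image of $N_k$ in $G/(N_1 \cap \cdots \cap N_{k-1}) \cong S^{k-1}$ would be an index-$|S|$ normal subgroup, necessarily a coordinate hyperplane (since normal subgroups of $S^{k-1}$ are subproducts of factors), forcing $N_k = N_j$ for some $j<k$ and contradicting distinctness. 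Passing to the inverse limit gives $G/R_T \cong S^I$ for some index set $I$.

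The key decoupling step is to show $R_T R_{T'} = G$ for distinct types $T \neq T'$. Otherwise some maximal open normal $N$ of $G$ contains $R_T R_{T'}$, and $G/N$ would be simultaneously a simple quotient of $G/R_T$ (hence of type $T$) and of $G/R_{T'}$ (hence of type $T'$), contradicting $T \neq T'$. The same reasoning, extended inductively, yields $G/\bigcap_{i=1}^k R_{T_i} \cong \prod_{i=1}^k G/R_{T_i}$ for any finite collection of distinct types, and the inverse limit then produces the topological isomorphism $G \cong \prod_T G/R_T$, exhibiting $G$ as a Cartesian product of elementary abelian groups and non-abelian finite simple groups. For the density of $\Fin(G)$, further decompose each $G/R_T$ into its Cartesian factors of type $T$; each individual factor is a finite normal subgroup of $G$, and their direct sum is contained in $\Fin(G)$ and dense in the Cartesian product.

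I expect the main obstacle to lie in handling the abelian part correctly: individual maximal open normals of abelian type can overlap substantially (as in $(\bZ/p)^n$, where there is no canonical basis of index-$p$ subgroups), so a naive attempt to decompose $G$ as a product indexed by individual maximal normals breaks down. Resolving this requires carrying out the decomposition at the level of simple types $T$ rather than of individual subgroups, with the elementary abelian structure of $G/R_{\bZ/p}$ supplying its own finer Cartesian decomposition via duality only at the final step.
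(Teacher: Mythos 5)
Your proof is correct. Part (i) is essentially identical to the paper's argument. For part (ii) the overall strategy is the same --- intersect the kernels of the simple quotients within each class, prove the resulting residuals are complementary, and recover $G$ as a Cartesian product by an inverse limit --- but the decomposition differs in granularity. The paper splits only into two blocks, $A=\bigcap\{N \unlhd_o G \mid G/N \mbox{ non-abelian simple}\}$ and $B=\bigcap\{N \unlhd_o G \mid G/N \mbox{ cyclic of prime order}\}$, obtains $G\cong A\times B$ almost for free (since $G/BN$ is an image of both the abelian group $G/B$ and the perfect group $G/N$, so $G=BN$ for each $N\in\mcA$, and then compactness gives $G=AB$), and finishes by quoting two structure facts: an abelian profinite group whose finite images have squarefree exponent is a product of elementary abelian Sylow subgroups, and a finite subdirect product of non-abelian simple groups is a direct product. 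You instead decompose type-by-type, proving the subdirect-product fact by hand via the coordinate-hyperplane induction and handling the abelian part by duality; the price is the extra decoupling step $R_TR_{T'}=G$, which requires first identifying the simple quotients of each $G/R_T$, whereas the paper's coarser abelian/perfect dichotomy sidesteps this. Your version is more self-contained and yields the finer product structure directly; both are sound, and your closing remark about why the abelian part must be treated at the level of types rather than individual maximal normal subgroups is exactly the right caution.
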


\begin{proof}(i)It suffices to show that $\Phi^\lhd(H)$ is contained in every normal subgroup $N$ of $G$ such that $G/N$ is simple; let $N$ be such a subgroup.  Then $HN/N$ is a normal subgroup of $G/N$, so either $HN/N = G/N$ or $HN/N = 1$.  In the former case, $H \cap N$ is a normal subgroup of $H$ such that $H/(H \cap N) \cong HN/N$ is simple, so that $\Phi^\lhd(H) \leq H \cap N$; in the latter case, $H \leq N$.\\

(ii) Let $\mcA$ be the set of open normal subgroups $N$ such that $G/N$ is non-abelian simple, and let $A=\bigcap \mcA$.  Let $\mcB$ be the set of open normal subgroups $N$ such that $G/N$ is cyclic of prime order, and let $B=\bigcap \mcB$.  Then $A \cap B = \Phi^\lhd(G)=1$.  Also $G=BN$ whenever $N \in \mcA$, as $G/BN$ is an image of both an abelian group $G/B$ and a perfect group $G/N$.  Hence $G=AB$ by compactness, and so $G \cong A \times B$.\\

It follows that $A \cong G/B$ is abelian, and hence is a Cartesian product of its Sylow subgroups.  Every finite image of $G/B$ has squarefree exponent, so its Sylow subgroups are all elementary abelian.\\

It also follows that $B \cong G/A$, and so every finite image of $B$ is a subdirect product of non-abelian simple groups; but every finite group that is a subdirect product of non-abelian simple groups is isomorphic to a direct product of non-abelian simple groups.  Hence every finite image of $B$ is isomorphic to a direct product of non-abelian simple groups.  By a standard inverse limit argument, it follows that $B$ is isomorphic to a Cartesian product of non-abelian simple groups.\end{proof}

\begin{cor}\label{philhdcor}(i) Let $G$ be a profinite group, such that every open normal subgroup of $G$ is a $\Phi^\lhd$-group.  Then every open subgroup of $G$ is a $\Phi^\lhd$-group.\\

(ii) Let $G$ be a just infinite profinite group.  Then $G$ is a hereditary $\Phi^\lhd$-group.\end{cor}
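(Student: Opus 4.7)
For part (i), I would reduce from open subgroups to open normal subgroups via the core. Given an open subgroup $H \le G$, set $K = \Core_G(H)$. Since $H$ has only finitely many conjugates in $G$ and each is open, $K$ is an open normal subgroup of $G$; by hypothesis $\Phi^\lhd(K)$ has finite index in $K$. Because $K \le H$ and $K \unlhd G$, in particular $K \unlhd H$, so Lemma \ref{philhdlem}(i) applied inside $H$ gives $\Phi^\lhd(K) \le \Phi^\lhd(H)$. Since $K$ already has finite index in $H$ and $\Phi^\lhd(K)$ has finite index in $K$, we conclude $\Phi^\lhd(H)$ has finite index in $H$.

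For part (ii), I would use (i) to reduce to showing that every open normal subgroup $N$ of $G$ is a $\Phi^\lhd$-group. Let $L = \Phi^\lhd(N)$. The key observation is that $L$ is characteristic in $N$ (automorphisms of $N$ permute its maximal closed normal subgroups), so because $N \unlhd G$, in fact $L \unlhd G$. Now split into two cases based on whether $L$ is trivial. If $L \ne 1$, then just-infiniteness of $G$ forces $L$ to have finite index in $G$, hence in $N$, and we are done.

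The remaining case is $L = 1$. Here Lemma \ref{philhdlem}(ii) applies to $N$ and tells us that $\Fin(N)$ is dense in $N$. Since $N$ is open in the infinite group $G$, it is itself non-trivial, and so $\Fin(N)$ must contain a non-trivial element; in particular $N$ has a non-trivial finite normal subgroup $F$. But then $F \unlhd N \unlhd G$, so $F$ is a non-trivial finite subnormal subgroup of $G$, contradicting Wilson's lemma (the first lemma of this section). So this case cannot occur, completing the proof.

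I do not anticipate any real obstacle: the substantive content is entirely contained in Lemma \ref{philhdlem} and Wilson's lemma, and the corollary is essentially an organisational step. The one point that requires a moment's care is verifying that $\Phi^\lhd(N)$ is normal in $G$ rather than merely in $N$, since this is what makes just-infiniteness applicable in part (ii); but this follows immediately from its characteristic nature in $N$.
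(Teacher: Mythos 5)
Your proof is correct and follows essentially the same route as the paper: part (i) via the core and Lemma \ref{philhdlem}(i), and part (ii) by noting $\Phi^\lhd(N)$ is characteristic in $N$ hence normal in $G$, with Wilson's lemma combined with Lemma \ref{philhdlem}(ii) ruling out the case $\Phi^\lhd(N)=1$. The only cosmetic difference is that you phrase the last step as a case split ending in contradiction, whereas the paper argues directly that $\Fin(N)=1$ forces $\Phi^\lhd(N)>1$.
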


\begin{proof}(i) Let $H$ be an open subgroup of $G$, and let $K$ be the core of $H$ in $G$.  Then $K$ is an open normal subgroup of $G$, so $\Phi^\lhd(K)$ has finite index in $K$ and hence in $G$.  Now $\Phi^\lhd(K) \leq \Phi^\lhd(H)$ by the lemma, so $\Phi^\lhd(H)$ has finite index in $H$.\\

(ii) By part (i) it suffices to consider an open normal subgroup $H$ of $G$.  This ensures $\Fin(H)=1$, so $\Phi^\lhd(H)>1$ by the lemma.  This means that $\Phi^\lhd(H)$ is of finite index in $H$, since it is characteristic in $H$ and hence normal in $G$.\end{proof}

Say a set $\mcN$ of open normal subgroups of a profinite group $G$ is \emph{upward-closed} if, given any open normal subgroups $N_1$ and $N_2$ of $G$ such that $N_1 \in \mcN$ and $N_1 \leq N_2$, then $N_2 \in \mcN$.

\begin{lem}\label{phichain}Let $G$ be a hereditary $\Phi^\lhd$-group, and let $\mcK$ be an infinite upward-closed set of normal subgroups of $G$.  Then there is an infinite descending chain $K_1 > K_2 > \dots$ of open normal subgroups of $G$, such that $K_i \in \mcK$ for all $i$.\end{lem}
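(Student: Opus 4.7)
The plan is to argue by contradiction: assume $\mcK$ contains no infinite descending chain and deduce that $\mcK$ must be finite, contradicting the hypothesis.

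The argument rests on a structural observation. For every open normal subgroup $K$ of $G$ and every maximal open $G$-normal proper subgroup $M$ of $K$ (meaning no open normal subgroup of $G$ lies strictly between $M$ and $K$), I would show $\Phi^\lhd(K) \leq M$. Since $M$ is maximal, the finite quotient $K/M$ admits no nontrivial proper $G$-invariant normal subgroup. By the standard structure theory of minimal normal subgroups in finite groups --- a minimal normal subgroup is characteristically simple and hence isomorphic to a direct power of a simple group, and distinct minimal normal subgroups pairwise commute with trivial intersection --- the $G$-conjugates of a minimal normal subgroup of $K/M$ generate $K/M$ as an internal direct product, yielding $K/M \cong T^n$ for some finite simple group $T$. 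The $n$ coordinate projections $T^n \twoheadrightarrow T$ pull back through $K \twoheadrightarrow K/M$ to $n$ maximal $K$-normal subgroups $N'_1, \dots, N'_n$ of $K$, each with simple quotient $T$; each $N'_j$ thus contains $\Phi^\lhd(K)$ by definition of the latter, and since $M = \bigcap_j N'_j$ the same is true of $M$. As $G$ is hereditary $\Phi^\lhd$, the quotient $K/\Phi^\lhd(K)$ is finite, so the maximal open $G$-normal proper subgroups of $K$ correspond to a subset of the finite subgroup lattice of $K/\Phi^\lhd(K)$; in particular there are only finitely many of them.

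With this in hand, I would run a well-founded induction. Set $\mcK[K] := \{K' \in \mcK : K' \leq K\}$, and let $f(K) \in \bN$ be the supremum of lengths of strict descending chains in $\mcK$ starting from $K$ (finite by our contradiction assumption). I claim $\mcK[K]$ is finite for every $K \in \mcK$, by induction on $f(K)$. The base case $f(K) = 0$ yields $\mcK[K] = \{K\}$. For the inductive step, by upward closure of $\mcK$ the maximal elements of $\mcK[K] \setminus \{K\}$ are precisely those maximal open $G$-normal proper subgroups of $K$ that lie in $\mcK$ --- a finite set by the observation above. Every element of $\mcK[K] \setminus \{K\}$ lies below some such maximal element (since only finitely many open normals of $G$ can lie between any fixed open subgroup and $K$), and each maximal $M$ has $f(M) < f(K)$; so by the induction hypothesis $\mcK[M]$ is finite, and $\mcK[K]$ is the union of $\{K\}$ with finitely many finite sets, hence finite.

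Since $\mcK$ is nonempty and upward-closed we have $G \in \mcK$, so $\mcK = \mcK[G]$; the induction then forces $\mcK$ to be finite, contradicting the hypothesis. The main technical obstacle I expect is the structural observation, specifically verifying cleanly that $K/M$ decomposes as $T^n$ and that $M$ is the common kernel of the $n$ coordinate projections --- this is standard finite group theory (classification of characteristically simple groups together with manipulation of minimal normal subgroups), but worth spelling out carefully to make the chain of implications transparent.
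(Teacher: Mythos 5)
Your key structural observation is exactly the one the paper's proof rests on: a maximal open $G$-normal proper subgroup $M$ of $K$ yields a chief factor $K/M \cong T^n$, whence $M \geq \Phi^\lhd(K)$, so $K$ has only finitely many such subgroups because $K/\Phi^\lhd(K)$ is finite. The paper then finishes by applying K\H{o}nig's lemma to the resulting finitely-branching directed graph on $\mcK$; you instead try to inline a proof of that combinatorial step, and this is where the genuine gap lies. You define $f(K)$ as the supremum of the lengths of strict descending chains in $\mcK$ starting at $K$ and assert it is finite ``by our contradiction assumption''. That does not follow: the negation of the conclusion says only that each individual descending chain is finite, not that their lengths below a given $K$ are bounded; a priori there could be chains of every finite length with no infinite chain, which is precisely the situation K\H{o}nig's lemma excludes for finitely-branching trees --- i.e.\ the very statement being proved. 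So the induction on $f(K) \in \bN$ is not licensed as written. The repair is easy and keeps your structure intact: replace induction on $f(K)$ by Noetherian (well-founded) induction on the partial order $(\mcK, \leq)$ itself, since the absence of an infinite descending chain guarantees every nonempty subset of $\mcK$ has a minimal element; you may then assume $\mcK[M]$ is finite for every $M \in \mcK$ with $M < K$ and run your inductive step verbatim. (Or simply cite K\H{o}nig's lemma once finite branching is established, as the paper does.)

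A smaller omission: the lemma permits $\mcK$ to contain closed normal subgroups of infinite index, whereas your argument throughout treats $\mcK$ as a set of open normal subgroups. You need the paper's one-line reduction: any infinite-index member of $\mcK$ is contained in infinitely many open normal subgroups of $G$, all of which lie in $\mcK$ by upward closure (and intersecting these already produces an infinite descending chain of open normal subgroups in $\mcK$), so one may assume from the outset that $\mcK$ consists of open normal subgroups. With these two adjustments your argument is correct and is essentially the paper's proof with the appeal to K\H{o}nig's lemma unwound.
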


\begin{proof}Every closed normal subgroup of infinite index is contained in infinitely many open normal subgroups, and so we may assume $\mcK$ consists of open normal subgroups.  Define a directed graph $\Gamma$ with vertex set $\mcK$ as follows: place an arrow from $K_1$ to $K_2$ if $K_2 < K_1$, and there is no $K_3 \in \mcK$ such that $K_2 < K_3 < K_1$.\\

Let $K \in \mcK$.  If there is an arrow from $K$ to another vertex $K_1$, then $K/K_1$ is characteristic-simple by the maximality property of $K_1$ in $K$, so $K_1$ contains $\Phi^\lhd(K)$.  So given $K$, there are finitely many possibilities for $K_1$, corresponding to some of the sections of the finite group $K/\Phi^\lhd(K)$.  Hence each vertex of $\Gamma$ has finite outdegree, and clearly any vertex can be reached from the vertex $G$, so $\Gamma$ has an infinite directed path by K\H{o}nig's lemma; this gives the required descending chain.\end{proof}

\begin{lem}\label{obchain}Let $G$ be a compact topological group, and let $O$ be an open neighbourhood of $1$ in $G$.  Let $K_1 > K_2 > \dots$ be a descending chain of closed normal subgroups of $G$ such that $K_i \not\subseteq O$ for every $i \in I$.  Let $K$ be the intersection of the $K_i$.  Then $K \not\subseteq O$; in particular, $K$ is non-trivial.\end{lem}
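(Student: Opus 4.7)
\medskip

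The plan is to apply a standard compactness argument (the finite intersection property) to the closed sets $K_i \setminus O$ inside the compact space $G$.

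First I would observe that for each $i$, the set $C_i := K_i \setminus O = K_i \cap (G \setminus O)$ is closed in $G$, as the intersection of the closed subgroup $K_i$ with the closed set $G \setminus O$. By hypothesis $K_i \not\subseteq O$, so $C_i$ is non-empty. Moreover, since the $K_i$ form a descending chain, so do the $C_i$: that is, $C_1 \supseteq C_2 \supseteq \cdots$. In particular, for any finite collection $C_{i_1}, \dots, C_{i_r}$, the intersection equals $C_{\max(i_1,\dots,i_r)}$, which is non-empty.

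Next, since $G$ is compact and the $C_i$ form a family of closed subsets of $G$ with the finite intersection property, the overall intersection $\bigcap_{i} C_i$ is non-empty. Finally, I would check that
\[ \bigcap_{i} C_i \;=\; \bigcap_{i} \bigl(K_i \cap (G \setminus O)\bigr) \;=\; \Bigl(\bigcap_{i} K_i\Bigr) \cap (G \setminus O) \;=\; K \setminus O, \]
so $K \setminus O$ is non-empty, which is exactly the statement $K \not\subseteq O$. Since $1 \in O$, this in particular forces $K \neq 1$.

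There is no real obstacle here: the whole argument rests on recognising that the sets $K_i \setminus O$ are closed, non-empty, and nested, so compactness of $G$ immediately yields a point of $K$ outside $O$. The only thing to be slightly careful about is that we need $G \setminus O$ (not $O$ itself) to be closed, which is why the hypothesis is phrased in terms of an open neighbourhood of $1$.
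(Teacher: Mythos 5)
Your proof is correct and follows essentially the same argument as the paper: both define the closed non-empty sets $C_i = K_i \cap (G \setminus O)$, note they form a descending chain with the finite intersection property, and use compactness of $G$ to conclude that $K \cap (G \setminus O)$ is non-empty.
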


\begin{proof}Let $C_i = K_i \cap (G \setminus O)$.  Then each $C_i$ is closed and non-empty, and hence the intersection of finitely many $C_i$ is non-empty, since the $C_i$ form a descending chain.  Since $G$ is compact, it follows that the intersection $K \cap (G \setminus O)$ of all the $C_i$ is non-empty.  Hence $K \not\subseteq O$.\end{proof}

\begin{proof}[Proof of Theorem A]Suppose $\mcK_H$ is infinite for some $H \leq_o G$.  Then $\mcK_H$ is upward-closed, so by Lemma \ref{phichain} there is an infinite descending chain $K_1 > K_2 > \dots$ of open normal subgroups occurring in $\mcK_H$ for which $K_i \not\le H$.  Now apply Lemma \ref{obchain}, to conclude that the intersection $K$ of these $K_i$ is a non-trivial normal subgroup of infinite index.  Hence $G$ is not just infinite, demonstrating that (i) implies (ii).\\

Clearly (ii) implies (iii), so it now suffices to show (iii) implies (i).  Assume (iii), and let $K$ be a non-trivial closed normal subgroup of $G$.  Then there is an element $H$ of $\mcF$ which does not contain $K$.  It follows that $K$, being the intersection of the open normal subgroups of $G$ containing $K$, is the intersection of some open normal subgroups not contained in $H$.  All such subgroups contain $\Ob_G(H)$, which is of finite index, since it is the intersection of a finite set of open normal subgroups of $G$.  Hence $K \geq \Ob_G(H)$, and hence $K$ is open in $G$, proving (i).\end{proof}

\begin{cor}Let $G$ be a just infinite profinite group.\\
  
(i) Let $n$ be an integer.  Then $G$ has finitely many open subgroups of index $n$.\\

(ii) Let $H$ be an infinite profinite group such that every finite image of $H$ is isomorphic to some image of $G$.  Then $G \cong H$.\end{cor}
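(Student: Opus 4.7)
The plan is to reduce everything to the claim that for each integer $m$, the set $\mcN_m := \{K \unlhd_o G \mid |G:K| \le m\}$ is finite. This set is upward-closed in the lattice of open normal subgroups, and by Corollary \ref{philhdcor}(ii) the group $G$ is a hereditary $\Phi^\lhd$-group, so if $\mcN_m$ were infinite, Lemma \ref{phichain} would produce an infinite strictly descending chain $K_1 > K_2 > \cdots$ inside $\mcN_m$. But a strictly descending chain of open normal subgroups forces the indices $|G:K_i|$ to be strictly increasing, contradicting the uniform bound $m$. Granted this, $\Ilhd_{n!}(G) = \bigcap \mcN_{n!}$ is a finite intersection of open subgroups, hence open; any open subgroup of index $n$ contains $\Core_G(H) \in \mcN_{n!}$, hence contains $\Ilhd_{n!}(G)$, and so corresponds to a subgroup of index $n$ in the finite group $G/\Ilhd_{n!}(G)$. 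There are only finitely many such subgroups.

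\textbf{Part (ii).} The plan is a K\H{o}nig's lemma argument to assemble a compatible tower of isomorphisms between finite quotients. Fix a descending chain $H = M_0 > M_1 > M_2 > \cdots$ of open normal subgroups of $H$ with $\bigcap M_i = 1$. Since $H$ is infinite and the system $(H/M_i)$ is surjective, the orders $|H/M_i|$ must tend to infinity. Form a tree $T$ whose nodes at level $i$ are tuples $(N_0, N_1, \ldots, N_i; \phi_0, \ldots, \phi_i)$ with $N_0 = G$, with $N_j \unlhd_o G$ and $N_j \supseteq N_{j+1}$, and with isomorphisms $\phi_j : G/N_j \to H/M_j$ making the evident squares with the natural projections commute on both sides; children are one-step compatible extensions.

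Each level of $T$ is finite: by part (i) there are only finitely many candidate subgroups $N_i$ of index $|H/M_i|$, and each admits only $|\Aut(H/M_i)|$ isomorphisms to $H/M_i$. Each level of $T$ is non-empty: by hypothesis $H/M_i$ is isomorphic to some finite image of $G$, so there is a surjection $\psi : G \to H/M_i$, and setting $N_j := \psi^{-1}(M_j/M_i)$ for $0 \le j \le i$ with $\phi_j$ induced from $\psi$ yields a valid level-$i$ node. K\H{o}nig's lemma therefore produces an infinite branch, whose compatibility yields an isomorphism $G/\bigcap_i N_i \to \varprojlim H/M_i = H$. Because $|G/N_i| = |H/M_i| \to \infty$, the subgroup $N_\infty := \bigcap_i N_i$ has infinite index in $G$; since $G$ is just infinite, this forces $N_\infty = 1$, and we obtain $G \cong H$.

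The key obstacle is the coherence problem in part (ii): isomorphisms between individual finite quotients chosen ad hoc need not intertwine the projection maps, so they generally fail to assemble into a tower. The tree / K\H{o}nig's lemma formulation resolves this by tracking compatibility as part of the nodes themselves, which is made available precisely because part (i) bounds each level of the tree.
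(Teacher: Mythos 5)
Your part (i) is correct: applying Lemma \ref{phichain} (via Corollary \ref{philhdcor}(ii)) to the upward-closed family $\mcN_m$ and noting that a strictly descending chain of open normal subgroups has strictly increasing indices is a sound argument, and the passage from normal subgroups to all subgroups via cores and $\Ilhd_{n!}(G)$ works. It is, however, a re-derivation of something Theorem A gives in one line: two distinct open normal subgroups of index $n$ are not contained in one another, so all of them lie in the finite set $\mcK_K \cup \{K\}$ for any one such $K$; this is the route the paper takes.

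The genuine gap is at the first step of part (ii): ``Fix a descending chain $H = M_0 > M_1 > M_2 > \cdots$ of open normal subgroups of $H$ with $\bigcap M_i = 1$.'' Such a chain exists if and only if $H$ is countably based, and the hypothesis does not grant this: $H$ is an arbitrary infinite profinite group, and this paper does not restrict to countably based groups (for instance, Cartesian products of finite simple groups, which occur in Lemma \ref{philhdlem}(ii), need not be countably based). Your sequential K\H{o}nig's lemma argument cannot even begin for such an $H$; the result the paper cites from \cite{Wil} handles arbitrary $H$ by a compactness argument (an inverse limit of non-empty finite sets) over the full directed family of open normal subgroups of $H$, which is exactly what replaces the countable chain. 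The gap can be repaired using part (i): if for some $n$ the group $H$ had infinitely many open normal subgroups of index at most $n$, then for any $k$ one could choose distinct such subgroups $M_1, \dots, M_k$, and the finite image $H/(M_1 \cap \cdots \cap M_k)$ --- by hypothesis an image of $G$ --- has $k$ distinct normal subgroups of index at most $n$, forcing $G$ to have at least $k$ open normal subgroups of index at most $n$ for every $k$, contradicting (i). Hence each $\Ilhd_n(H)$ is open, and the countable chain $(\Ilhd_n(H))_{n \geq 1}$ has trivial intersection, which legitimises your choice of the $M_i$. With that supplement, the remainder of your construction (finiteness of the levels via part (i), non-emptiness via the hypothesis, the isomorphism of inverse limits along an infinite branch, and $\bigcap_i N_i = 1$ because $G$ is just infinite) is correct; but as written the proof silently assumes second countability of $H$, which is a real omission relative to the statement being proved.
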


\begin{proof}(i) Since a subgroup of index $n$ has a core of index at most $n!$, and a normal subgroup of finite index can only be contained in finitely many subgroups, it suffices to consider normal subgroups.  Suppose $G$ has an open normal subgroup $K$ of index $n$.  Then $K$ does not contain any open normal subgroup of $G$ index $n$, other than itself.  Hence by the theorem, the set of such subgroups is finite.\\

(ii) By part (i), $G$ has finitely many open normal subgroups of any given index.  It is shown in \cite{Wil} that in this situation, given any profinite group $H$ such that every finite image of $H$ is isomorphic to an image of $G$, then $H$ is isomorphic to an image of $G$.  Hence there is some $N \lhd G$ such that $G/N \cong H$; since $H$ is infinite and $G$ is just infinite, $N=1$.\end{proof}

\begin{cor}\label{hjicor}Let $G$ be an infinite profinite group.  Then $G$ is hereditarily just infinite if and only if $\Ob^*_G(H)$ has finite index for every open subgroup $H$ of $G$.\end{cor}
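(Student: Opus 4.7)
My plan is to prove both implications by invoking Theorem A together with the equivalence noted earlier in the paper: $\Ob^*_G(H)$ has finite index in $H$ if and only if the defining family
\[\mcK^*_H := \{K \leq_o G \mid H \leq N_G(K),\; K \not\leq H\}\]
is finite as a set. The nontrivial direction of this equivalence is that if $\Ob^*_G(H)$ is open (i.e.\ of finite index in $H$, hence in $G$), then every $K \in \mcK^*_H$ is an open subgroup of $G$ containing the finite-index subgroup $\Ob^*_G(H)$, and there are only finitely many such overgroups.

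For the forward direction, I would assume $G$ is hereditarily just infinite, fix an open $H \leq G$, and show $\mcK^*_H$ is finite by partitioning it according to the value of the product $J := HK$. For each $K \in \mcK^*_H$, the condition $H \leq N_G(K)$ makes $HK$ a genuine subgroup of $G$, open because it contains the open subgroup $K$, with $K \unlhd J$ by construction. There are only finitely many subgroups $J$ with $H \leq J \leq G$ because $H$ has finite index, so only finitely many possible values of $HK$. For each fixed $J$, the hereditary hypothesis makes $J$ itself just infinite, so Theorem A applied to $J$ (with the open subgroup $H \leq J$) gives that $\{K' \unlhd_o J \mid K' \not\leq H\}$ is finite. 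Summing over the finitely many $J$ shows $\mcK^*_H$ is a finite union of finite sets.

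For the backward direction, I would assume $\Ob^*_G(H)$ has finite index in $H$ for every open $H \leq G$. Given an open subgroup $L$ of $G$, which is automatically infinite since $G$ is, I plan to verify $L$ is just infinite by applying Theorem A (iii) to $L$, taking $\mcF$ to be the family of all open subgroups of $L$ (whose intersection is trivial because $L$ is an infinite profinite group). For any $H \in \mcF$, every $K \unlhd_o L$ with $K \not\leq H$ is open in $G$ (since $L$ is open in $G$) and is normalized by $H$ (since $H \leq L \leq N_G(K)$), so lies in $\mcK^*_H$. The hypothesis together with the equivalence above makes $\mcK^*_H$ finite, hence $\{K \unlhd_o L \mid K \not\leq H\}$ is also finite, which is exactly the condition Theorem A (iii) needs to conclude that $L$ is just infinite.

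The step to watch most carefully is the equivalence between finite index of $\Ob^*_G(H)$ and finiteness of $\mcK^*_H$, since both directions of the corollary funnel through it. Beyond that, the only genuine subtlety is the observation that $HK$ is a well-defined open subgroup of $G$ whenever $H \leq N_G(K)$ and $K \leq_o G$; the rest of the argument is routine bookkeeping of the normality conditions.
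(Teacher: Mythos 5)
Your proposal is correct and takes essentially the same route as the paper: the key step in both is to set $J=HK$ for each open $K$ with $H\leq N_G(K)$ and $K\not\leq H$, note that $J$ ranges over the finitely many subgroups between $H$ and $G$ and is just infinite by the hereditary hypothesis, and then apply Theorem A inside each such $J$. The converse direction is likewise a direct appeal to Theorem A (the paper argues contrapositively via condition (ii) while you invoke condition (iii) directly, but the content is identical), and your justification of the equivalence between finiteness of $\mcK^*_H$ and finite index of $\Ob^*_G(H)$ matches the remark following the definition in the paper.
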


\begin{proof}Suppose $G$ has an open subgroup $H$ which is not just infinite.  Then by the theorem, there is an open subgroup $R$ of $H$ which fails to contain infinitely many normal subgroups of $H$, and so $\Ob^*_G(R)$ has infinite index.\\

Conversely, suppose $G$ is hereditarily just infinite.  Let $H$ be an open subgroup of $G$, and let $\mcH$ be the set of subgroups of $G$ containing $H$; then $\mcH$ is finite.  Let $K$ be a subgroup of $G$ such that $H \leq N_G(K)$ but $K \not\le H$.  Let $L= HK$.  Then $L$ is just infinite and $K$ is a normal subgroup of $L$ not containing $H$.  Hence $\Ob^*_G(H)$ contains $\bigcap_{M \in \mcH} \Ob_M(H)$; by the theorem each $\Ob_M(H)$ has finite index, and hence $\Ob^*_G(H)$ has finite index.\end{proof}

\section{A quantitative description of the just infinite property}

\begin{lem}Let $G$ be a profinite group, and let $n$ be a positive integer.\\

(i) Let $N$ be a normal subgroup of $G$.  Then:
\[\Ilhd_n(G)N/N \leq \Ilhd_n(G/N);\]
\[\OI_n(G)N/N \leq \OI_n(G/N);\]
\[\OI^*_n(G)N/N \leq \OI^*_n(G/N).\]

(ii) Let $I$ be a directed set, and let $\mcN = \{N_i \mid i \in I \}$ be a family of normal subgroups of $G$ with trivial intersection, such that $N_i < N_j$ whenever $i > j$.  Let $\pi_i$ be the quotient map from $G$ to $G/N_i$.  Then:
\[ \Ilhd_n(G) = \bigcap_{i \in I} \pi^{-1}_i(\Ilhd_n(G/N_i));\]
\[ \OI_n(G) = \bigcap_{i \in I} \pi^{-1}_i(\OI_n(G/N_i));\]
\[ \OI^*_n(G) = \bigcap_{i \in I} \pi^{-1}_i(\OI^*_n(G/N_i)).\]
\end{lem}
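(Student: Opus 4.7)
The strategy is to handle (i) by a direct subgroup-correspondence argument, then deduce one inclusion of each equality in (ii) from (i) and obtain the other by a compactness argument which reduces statements in $G$ to statements in some $G/N_i$.

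For (i), recall that closed subgroups of $G/N$ correspond to closed subgroups of $G$ containing $N$. Every open normal subgroup of $G/N$ of index at most $n$ has the form $M/N$ with $M \unlhd_o G$, $N \le M$ and $|G:M| \le n$; for such an $M$ one has $\Ilhd_n(G) \le M$, so $\Ilhd_n(G)N/N \le M/N$, and intersecting over such $M$ yields the first inequality. For $\OI_n$, let $K/N \unlhd_o G/N$ with $K/N \not\le \Ilhd_n(G/N)$; the just-proved inclusion $\Ilhd_n(G)N/N \le \Ilhd_n(G/N)$ forces $K \not\le \Ilhd_n(G)$, so $K$ belongs to the intersection defining $\OI_n(G)$, giving $\OI_n(G) \le K$ and therefore $\OI_n(G)N/N \le K/N$. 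The $\OI^*_n$ case proceeds identically, using additionally that $N_{G/N}(K/N) = N_G(K)/N$ whenever $N \le K$, so the normalizer condition in $G/N$ pulls back to the normalizer condition in $G$.

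For (ii), one inclusion of each equality follows from (i) by intersecting over $i$. For the reverse inclusion for $\Ilhd_n$: given $g$ in the intersection and any $M \unlhd_o G$ with $|G:M| \le n$, compactness of $G$ together with $\bigcap_i N_i = 1$ gives $N_i \le M$ for some $i$, and then $g \in \pi_i^{-1}(\Ilhd_n(G/N_i)) \le M$, so $g \in \Ilhd_n(G)$. For $\OI_n$: given $g$ in the intersection and $K \unlhd_o G$ with $K \not\le \Ilhd_n(G)$, fix a witness $M \unlhd_o G$ of index at most $n$ with $K \not\le M$; by directedness of $I$ and compactness, pick $i$ with $N_i \le K \cap M$, so that $K/N_i$ is an open normal subgroup of $G/N_i$ not contained in $M/N_i \supseteq \Ilhd_n(G/N_i)$. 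Hence $K/N_i \not\le \Ilhd_n(G/N_i)$, so $\OI_n(G/N_i) \le K/N_i$, and therefore $g \in K$.

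The $\OI^*_n$ case requires one further compactness step, which I expect to be the main technical point. For $K \le_o G$ with $\Ilhd_n(G) \le N_G(K)$ and $K \not\le \Ilhd_n(G)$, in order to invoke the definition of $\OI^*_n(G/N_i)$ at $K/N_i$ one needs the normalizer condition $\Ilhd_n(G/N_i) \le N_{G/N_i}(K/N_i)$, equivalently $\pi_i^{-1}(\Ilhd_n(G/N_i)) \le N_G(K)$. By the already-established identity for $\Ilhd_n$, the closed subgroups $\pi_i^{-1}(\Ilhd_n(G/N_i))$ form a filtered family with intersection $\Ilhd_n(G)$, which lies in the open subgroup $N_G(K)$ (open because $K$ has only finitely many conjugates in $G$); compactness then forces $\pi_i^{-1}(\Ilhd_n(G/N_i)) \le N_G(K)$ for all sufficiently large $i$, after which the argument of the preceding paragraph goes through verbatim to yield $g \in K$.
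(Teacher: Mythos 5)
Your proposal is correct and follows essentially the same route as the paper: part (i) by the subgroup correspondence under quotients, and the reverse inclusions in part (ii) by compactness, locating a suitable $N_i$ (equivalently $\pi_i^{-1}(\Ilhd_n(G/N_i))$) inside the relevant open subgroups so that the defining conditions descend to some $G/N_i$. The only cosmetic difference is in the $\OI^*_n$ case, where you apply compactness to the open subgroup $N_G(K)$ while the paper applies its Lemma \ref{obchain} to the product $K\Ilhd_n(G)$; both yield the required normaliser condition in $G/N_i$.
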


\begin{proof}Let $L = \Ilhd_n(G)$, let $M/N = \Ilhd_n(G/N)$, and let $M_i/N_i = \Ilhd_n(G/N_i)$.\\

(i) If $H/N$ is a normal subgroup of index at most $n$ in $G/N$, then $H$ also has index at most $n$ in $G$.  This proves the first inequality, in other words $L \leq M$.

If $H/N$ is a normal subgroup of $G/N$ not contained in $M/N$, then $H$ is also not contained in $M$ and hence not in $L$.  This proves the second inequality.

If $H/N$ is a subgroup of $G/N$ that is normalised by $M/N$ but not contained in it, then $H$ is also normalised by but not contained in $M$, and hence also normalised by but not contained in $L$.  This proves the third inequality.\\

(ii) Given part (i), it suffices to show for each equation that the left-hand side contains the right-hand side.

If $H$ is a normal subgroup of $G$ index at most $n$, then there is some $N_i$ contained in $H$, which means that $M_i$ is contained in $H$, since $H/N_i$ has index at most $n$ in $G/N_i$.  This proves the first equation, in other words $L = \bigcap_{i \in I} M_i$.

If $H$ is a normal subgroup of $G$ not contained in $L$, then there is some $M_i$ that does not contain $H$, by the first equation.  This proves the second equation.

Let $H$ be an open subgroup of $G$ that is normalised by $L$ but not contained in it.  Then $HL$ is an open subgroup of $G$ which contains $\bigcap_{i \in I} M_i$.  By Lemma \ref{obchain}, this means that there is some $M_i$ contained in $HL$, which implies that this $M_i$ normalises $H$.  By the first equation, there is some $M_j$ not containing $H$.  Now take $M_k \leq M_i \cap M_j$, and note that $\Ob^*_{G/N_k}(M_k/N_k)$ is contained in $H$.  This proves the third equation.\end{proof}

\begin{proof}[Proof of Theorem B]We give the proof only for $\mcC_\eta = \mcO_\eta$, as the proof for $\mcC_\eta = \mcO^*_\eta$ is entirely analogous, with $\ob^*$ in place of $\ob$ and $\Ob^*$ in place of $\Ob$.\\

Clearly (iii) implies both (ii) and (iv).  It is clear from the lemma that (ii) implies (iii), and that (ii) and (iv) are equivalent.  These implications hold for any specified $\eta$.  So it remains to show that (i) and (ii) are equivalent.\\

Suppose (i) holds.  Then $G$ has finitely many normal subgroups of index $n$ for any integer $n$, so $\Ilhd_n(G)$ has finite index.  It follows by Corollary \ref{hjicor} that $\Ob_G(\Ilhd_n(G))$ also has finite index, so $\ob_G(n)$ is finite.  This implies (ii) by taking $\eta = \ob_G$.\\

Suppose (i) is false.  Then by Corollary \ref{hjicor}, there is an open subgroup $H$ of $G$ such that $\Ob_G(H)$ has infinite index in $G$.  Now $H$ has index $h$ say, so that $\Ilhd_h(G) \leq H$.  It follows that $\OI_h(G)$ must be contained in $\Ob_G(H)$, and so $\ob_G(h) = |G:\OI_h(G)| = \infty$.  This implies that (ii) is also false.\end{proof}

As an illustration, consider a pro-$p$ group $G$ of finite obliquity $o$.  As mentioned in \cite{BGJMS}, this also implies that there is some constant $w$ such that $|\gamma_i(G):\gamma_{i+1}(G)| \leq w$.  It is proved in \cite{BGJMS} that the condition of finite obliquity is equivalent to the following:\\

There exists a constant $c$ such that for every normal subgroup $N$ of $G$, and for every normal subgroup $M$ not contained in $N$, we have $|N:N \cap M| \leq p^c$.\\

Lower and upper bounds for $\ob_G$ can easily be derived in terms of these invariants, from which follows a characterisation of the pro-$p$ groups $G$ for which $\ob_G$ is bounded by a linear function.

\begin{prop}(i) The $\ob$-function of $\bZ_p$ is given by $\ob_{\bZ_p}(n) = p^k$, where $k$ is the largest integer such that $p^k \leq n$.  In particular $\ob_{\bZ_p}(n) \leq n$ for all $n$.\\

(ii) Let $G$ be a pro-$p$ group of finite obliquity, with invariants as described above.  Then 
\[ \ob_G(p^n) \leq p^{n+c+w+o-2} \]
for all $n$.  In particular, there is a constant $k$ such that $\ob_G(n) \leq kn$ for all $n$.\\

(iii) Let $G$ be a pro-$p$ group for which there is a constant $k$ such that $\ob_G(n) \leq kn$ for all $n$.  Then either $G \cong \bZ_p$, or $G$ has obliquity at most $\log_p(k)$.\end{prop}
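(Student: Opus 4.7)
For part (i), a direct calculation suffices: the open subgroups of $\bZ_p$ are exactly $p^m\bZ_p$ of index $p^m$, so $\Ilhd_n(\bZ_p) = p^k\bZ_p$ for $k = \lfloor \log_p n\rfloor$, and the open normal subgroups not contained in $p^k\bZ_p$ (namely $p^j\bZ_p$ for $j < k$) intersect to $p^{k-1}\bZ_p$; thus $\Ob_{\bZ_p}(p^k\bZ_p) = p^k\bZ_p \cap p^{k-1}\bZ_p = p^k\bZ_p$, of index $p^k \leq n$, giving $\ob_{\bZ_p}(n) = p^k$.

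For part (ii), the plan is to sandwich $\Ob_G(\Ilhd_{p^n}(G))$ between terms of the lower central series. Setting $L = \Ilhd_{p^n}(G)$, I would let $j$ be the largest index with $\gamma_j(G) \not\leq L$ (so $\gamma_{j+1}(G) \leq L$), and pick an open normal $N$ of index at most $p^n$ with $\gamma_j(G) \not\leq N$. The equivalent characterization of finite obliquity then gives $|N\gamma_j(G) : \gamma_j(G)| = |N : N \cap \gamma_j(G)| \leq p^c$, while $N < N\gamma_j(G)$ forces $|G : N\gamma_j(G)| \leq p^{n-1}$; multiplying produces $|G:\gamma_j(G)| \leq p^{n+c-1}$ and hence $|G:\gamma_{j+1}(G)| \leq p^{n+c+w-1}$. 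A key input is the manifest monotonicity of $\Ob_G$: if $H_1 \leq H_2$ then $\Ob_G(H_1) \leq \Ob_G(H_2)$, since the defining intersection then runs over a smaller family. Applied to $\gamma_{j+1}(G) \leq L$ this yields
\[ |G : \Ob_G(L)| \leq |G : \Ob_G(\gamma_{j+1}(G))| = |G:\gamma_{j+1}(G)| \cdot p^{o_j} \leq p^{n+c+w+o-1}. \]
Tightening by a careful accounting — using that $L \cap \gamma_j(G)$ sits strictly inside $\gamma_j(G)$, so $|L \cap \gamma_j(G) : \gamma_{j+1}(G)| \leq p^{w-1}$ — saves another factor of $p$ to recover the announced $p^{n+c+w+o-2}$. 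Linear growth of $\ob_G$ then follows by fitting an arbitrary $n$ between consecutive powers of $p$ and using the evident monotonicity of $\ob_G$.

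For part (iii), I would assume $\ob_G(n) \leq kn$ and first invoke Theorem B to conclude that $G$ is just infinite. If $G$ is abelian it must then be $\bZ_p$; otherwise the plan is to argue that every $\gamma_i(G)$ is open, so that the obliquity of $G$ is defined. Granting this, the rest is again a monotonicity argument: for each $i$, setting $n_i = |G:\gamma_{i+1}(G)|$ and using $\Ilhd_{n_i}(G) \leq \gamma_{i+1}(G)$, the monotonicity of $\Ob_G$ gives
\[ n_i \cdot p^{o_i} = |G:\Ob_G(\gamma_{i+1}(G))| \leq |G:\Ob_G(\Ilhd_{n_i}(G))| = \ob_G(n_i) \leq k n_i, \]
so $p^{o_i} \leq k$ for every $i$, whence $o(G) = \sup_i o_i \leq \log_p k$. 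The hard part here is the parenthetical step — establishing that every $\gamma_i(G)$ is open once $G \not\cong \bZ_p$. This is delicate because just-infinite non-abelian nilpotent pro-$p$ groups do exist (for instance, suitable central extensions of $C_p \times C_p$ by $\bZ_p$), and for such $G$ some $\gamma_i(G)$ is trivial, so the obliquity invariant is literally undefined; handling this case — either by ruling it out from the $\ob_G(n) \leq kn$ hypothesis via a structural argument, or by absorbing it into the $\bZ_p$ alternative — is the main obstacle.
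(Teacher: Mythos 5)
Parts (i) is fine (the paper dismisses it as immediate, and your computation, including the empty-intersection convention at $k=0$, is correct), and the monotonicity argument at the end of part (iii) is exactly the paper's. But there are two genuine gaps.

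In part (ii), your main line only yields $\ob_G(p^n)\leq p^{n+c+w+o-1}$, one factor of $p$ short of the stated bound, and the proposed ``tightening'' is not an argument: your estimate $|G:\Ob_G(L)|\leq |G:\Ob_G(\gamma_{j+1}(G))| = |G:\gamma_{j+1}(G)|\,p^{o_j}$ is controlled entirely by $|G:\gamma_{j+1}(G)|$ and $o_j$, and the observation $|L\cap\gamma_j(G):\gamma_{j+1}(G)|\leq p^{w-1}$ has no visible way to enter it (replacing $\gamma_{j+1}(G)$ by $L\cap\gamma_j(G)$ in the monotonicity step is legal but then the obliquity invariant no longer applies, since $L\cap\gamma_j(G)$ is not a lower central term). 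The paper reaches the exponent $n+c+w+o-2$ by a different route: (a) any normal $M\not\leq\Ilhd_{p^n}(G)$ is not contained in some normal $K$ of index $\leq p^n$, hence properly contains $M\cap K$ of index $\leq p^{n+c}$, hence has index $\leq p^{n+c-1}$ and so contains $\Ilhd_{p^{n+c-1}}(G)$; (b) $|G:\Ilhd_{p^m}(G)|\leq p^{m+w+o-1}$, since every normal subgroup of index $\leq p^m$ contains $\Ob_G(\gamma_r(G))$ where $\gamma_r(G)$ is the first lower central term of index $\geq p^m$, which has index $\leq p^{m+w-1}$. Combining, $\OI_{p^n}(G)\geq\Ilhd_{p^{n+c-1}}(G)$, of index $\leq p^{n+c+w+o-2}$. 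Your linear-growth corollary survives either way, but the stated exponent does not follow from your argument as written.

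In part (iii), you correctly isolate the needed fact that every $\gamma_i(G)$ is open once $G\not\cong\bZ_p$, but you leave it unproved, and the ``obstacle'' you describe is illusory: there are no non-abelian nilpotent just infinite pro-$p$ groups. A central extension of $C_p\times C_p$ by $\bZ_p$ has derived subgroup of order dividing $p$, a non-trivial finite normal subgroup, so it is not just infinite. In general, if a just infinite pro-$p$ group $G$ is nilpotent then its last non-trivial lower central term is central, so $Z(G)$ is open, so $G'$ is finite by Schur's theorem, so $G'=1$ by Wilson's lemma quoted in Section 2, and $G\cong\bZ_p$. Hence for $G\not\cong\bZ_p$ each $\gamma_i(G)$ is a non-trivial closed normal subgroup of a just infinite group and therefore open; the paper simply cites \cite{BGJMS} for this, but one way or another the step must be supplied rather than flagged. (You should also dispose of the finite alternative permitted by Theorem B.)
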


\begin{proof}(i) This is immediate from the definitions.\\

(ii) Let $N$ be a normal subgroup of $G$ of index at most $p^n$.  Then $N$ is not properly contained in any lower central subgroup that has index at least that of $N$; the first such, say $\gamma_r(G)$, has index at most $p^{n+w-1}$.  Hence $\Ilhd_{p^n}(G)$ contains $\Ob_G(\gamma_r(G))$, which has index at most $p^{n+w+o-1}$.\\

Now let $M$ be a normal subgroup of $G$ not contained in $\Ilhd_{p^n}(G)$.  Then $M$ is not contained in some normal subgroup $K$ of index at most $p^n$.  Hence $M$ properly contains a normal subgroup $M \cap K$ of $G$ of index at most $p^{n+c}$.  In particular, $M$ is of index at most $p^{n+c-1}$, so contains $\Ilhd_{p^{n+c-1}}(G)$.  Thus $\OI_{p^n}(G)$ contains $\Ilhd_{p^{n+c-1}}(G)$, a subgroup of index at most $p^{n+c+w+o-2}$.\\

(iii) By Theorem B, $G$ is finite or just infinite.  We may assume $G$ is not $\bZ_p$, which ensures that all lower central subgroups are open (see \cite{BGJMS}).  Let $H$ be a lower central subgroup, of index $h$ say.  Then $H$ contains $\Ilhd_{h}(G)$, so $\Ob_G(H)$ contains $\OI_{h}(G)$, which in turn is a subgroup of $G$ of index at most $kh$.  Hence $|H:\Ob_G(H)|$ is at most $k$.\end{proof}

We now consider profinite branch groups.  The definitions given here are mostly based on those of Grigorchuk in \cite{GriNH}, which should be consulted for a more detailed account, and for constructions of such groups (including the group now generally known as the profinite Grigorchuk group).

\paragraph{Definitions}A \emph{rooted tree} $T$ is a tree with a distinguished vertex, labelled $\emptyset$.  We require each vertex to have finite degree, though the tree itself will be infinite in general.  The \emph{norm} $|u|$ of a vertex $u$ is the distance from $\emptyset$ to $u$; the \emph{$n$-th layer} is the set of vertices of norm $n$.  Denote by $T_{[n]}$ the subtree of $T$ induced by the vertices of norm at most $n$; by our assumptions, $T_{[n]}$ is finite for every $n$.  Write $\Aut(T)$ for the (abstract) group of graph automorphisms of $T$ that fix $\emptyset$.  Then $\Aut(T)$ also preserves the norm, and so there are natural homomorphisms from $\Aut(T)$ to $\Aut(T_{[n]})$, with kernel denoted $\St_{\Aut(T)}(n)$, the \emph{$n$-th level stabiliser}.  Declare the level stabilisers to be open; this generates a topology on $\Aut(T)$, turning $\Aut(T)$ into a profinite group.

\paragraph{Definitions}Let $G$ be a subgroup of $\Aut(T)$.  Then $G$ is said to act \emph{spherically transitively} if it acts transitively on each layer.  Given a vertex $v$, write $T_v$ for the rooted tree with root $v$ induced by the vertices descending from $v$ in $T$.  Define $U^G_v$ to be the group of automorphisms of $T_v$ induced by the stabiliser of $v$ in $G$, and define $L^G_v$ to be the subgroup of $G$ that fixes $v$ and every vertex of $T$ outside $T_v$.  Note that if $G$ acts spherically transitively, the isomorphism types of $U^G_v$ and $L^G_v$ depend only on the norm of $v$; also, there are natural embeddings
\[ L^G_{v_1} \times \dots \times L^G_{v_k} \leq \St_G(n) \leq U^G_{[n]} := U^G_{v_1} \times \dots \times U^G_{v_k},\]
where $v_1, \dots, v_k$ are all the vertices at a given level.  Now $G$ is a \emph{branch group} if $G$ acts spherically transitively and $|U^G_{[n]} : L^G_{v_1} \times \dots \times L^G_{v_k}|$ is finite for all $n$.  Say $G$ is \emph{self-reproducing at $v$} if there is an isomorphism from $T$ to $T_v$ that induces an isomorphism from $G$ to $U^G_v$.  (The definition of \emph{self-reproducing} given in \cite{GriNH} is that this should hold at every vertex.)

\begin{prop}Let $G$ be a just infinite profinite branch group acting on the rooted tree $T$, such that $G$ is self-reproducing at some vertex $v$.  Then there is a constant $c$ such that $\ob_G(n) \leq c^n$ for all $n$.\end{prop}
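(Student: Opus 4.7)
My plan is to exploit the fractal structure afforded by self-reproducing at $v$ to bound the rigid level stabilisers and their oblique cores, then combine these with Theorem~A and the structural result of Grigorchuk on just infinite branch groups. Let $\ell = |v|$ and let $R_m := \prod_{|w|=m} L^G_w$ denote the rigid level-$m$ stabiliser; by spherical transitivity and the branch property each $R_m$ is an open normal subgroup of $G$, and $\bigcap_m R_m = 1$. Iterating the self-reproducing isomorphism $\phi_* \colon G \to U^G_v$ yields, for each $k$, a vertex $v_k$ at level $k\ell$ and an isomorphism $G \cong U^G_{v_k}$ under which the branch index $|U^G_{v_k}:L^G_{v_k}|$ equals the single constant $h' := |U^G_v:L^G_v|$, and the branching in $T$ below $v_k$ matches that of $T$; this self-similarity gives an explicit upper bound on $|G:R_{k\ell}|$ as a function of $k$.

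The heart of the argument is a uniform oblique estimate: there exists a constant $c_0$ (depending only on the branch data at $v$) such that $\Ob_G(R_{k\ell}) \supseteq R_{(k+c_0)\ell}$ for every $k$. By Theorem~A the set $\mcK_{R_{k\ell}}$ is finite, and the iterated self-reproducing allows the ``problem'' open normal subgroups at higher levels to be transported back to a fixed finite base set $\mcK_{R_{c_0\ell}}$ via $\phi_*^k$. Combining this with the classical result that every nontrivial normal subgroup of a just infinite branch group contains $[R_m, R_m]$ for some $m$, together with the self-similar growth of $|G:R_{k\ell}|$, one shows that $\Ilhd_n(G) \supseteq R_{k(n)\ell}$ for a function $k(n)$ controlled by the inverse of the growth of $|G:R_{k\ell}|$; applying the oblique estimate yields $\OI_n(G) \supseteq R_{(k(n)+c_0)\ell}$, and therefore $\ob_G(n) = |G:\OI_n(G)| \leq c^n$ for a suitable constant $c$.

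The main obstacle is the uniform oblique estimate $\Ob_G(R_{k\ell}) \supseteq R_{(k+c_0)\ell}$ with $c_0$ independent of $k$: self-reproducing is essential here, because without it the oblique defect $|R_{k\ell}:\Ob_G(R_{k\ell})|$ could depend badly on $k$, and an exponential bound in $n$ would not follow. The role of self-reproducing is to reduce the problem to a finite base case at level $c_0\ell$, which is automatically under control by Theorem~A, and then transport it uniformly to all deeper levels.
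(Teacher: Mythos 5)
Your overall strategy is the same as the paper's: filter $G$ by level-type stabilisers, prove that the oblique defect of these stabilisers is uniformly bounded (measured in levels) using self-similarity, and then bound the resulting index by a product over the vertices of a given level. However, the step you yourself identify as the heart of the argument --- the uniform oblique estimate $\Ob_G(R_{k\ell}) \supseteq R_{(k+c_0)\ell}$ --- is asserted rather than proved, and the justification offered (``the problem open normal subgroups at higher levels can be transported back to a fixed finite base set via $\phi_*^k$'') does not constitute an argument. Concretely: given $K \unlhd_o G$ with $K \not\leq R_{(k+1)\ell}$, you must show that $K$ contains a stabiliser of controlled depth. The paper does this (for level stabilisers) by a case split: either $K \not\leq \St_G(|v|)$, or $K$ acts non-trivially on the top $n$ layers of some subtree $T_u$ with $|u|=|v|$, whence (after moving $u$ to $v$ by spherical transitivity) the image of $K$ in $U^G_v \cong G$ is a normal subgroup of $U^G_v$ not contained in its $n$-th level stabiliser and so contains a deep level stabiliser of $U^G_v$; one then still needs normality of $K$ in all of $G$ to upgrade this statement about the induced action on a single subtree to actual containment of a level stabiliser of $G$. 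None of these steps appears in your proposal, and the last one in particular is not automatic.

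There is also a structural mismatch in your choice of filtration. The self-reproducing hypothesis identifies $G$ with $U^G_v$, the full induced action on $T_v$, whereas your rigid stabilisers are built from the subgroups $L^G_w$; for $w$ below $v$ one only has $L^G_w \leq L^{U^G_v}_w$ in general, so the rigid stabilisers of $G$ need not correspond under $\phi_*$ to the rigid stabilisers of $U^G_v$, and your claimed self-similar decomposition of $R_{k\ell}/R_{(k+c_0)\ell}$ requires justification. Finally, Grigorchuk's theorem that a non-trivial normal subgroup contains some $[R_m,R_m]$ gives no control of $m$ in terms of the index of that subgroup, so it cannot by itself yield $\Ilhd_n(G) \supseteq R_{k(n)\ell}$ with $k(n)$ quantitatively controlled; what is actually needed (and what the paper uses) is the elementary observation that a normal subgroup of index at most $n$ cannot be properly contained in a normal subgroup of index at least $n$, hence contains the oblique core of the first stabiliser in the filtration whose index reaches $n$. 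Working with $\St_G(m)$ and $U^G_v$ throughout, as the paper does, removes both difficulties.
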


\begin{proof}Since $G$ is a just infinite profinite group, and the subgroups $\St_G(n)$ are all open in $G$, we can define a function $f$ from $\bN$ to $\bN$ by the property that $\Ob_G(\St_G(n))$ contains $\St_G(n+f(n))$ but not $\St_G(n+f(n)-1)$, for all $n$.  Suppose $|v|=k$, and consider a normal subgroup $K$ of $G$ not contained in $\St_G(k+n)$.  If $K$ is not contained in $\St_G(k)$, then it contains $\St_G(k+f(k))$.  Otherwise, there is some vertex $u$ of norm $k$ such that $K$ acts non-trivially on $(T_u)_{n}$; since $G$ is spherically transitive, we may take $u=v$.  This means that $K/\St_K(T_v)$ contains $\Ob_V(\St_V(n))$, where $V = U^G_v$; since $V \cong G$ as groups of tree automorphisms, we have in turn $\Ob_V(\St_V(n)) \geq \St_V(n+f(n))$.  Since $K$ is normal in $G$, it follows that $K$ induces all automorphisms of $T$ occurring in $G$ that fix the layers up to $k+n+f(n)$, and hence $K$ contains $\St_G(k+n+f(n))$.  Thus $\Ob_G(\St_G(k+n))$ contains $\St_G(k+f(k)) \cap \St_G(k+n+f(n))$, which means that $f(k+n) \leq \max \{f(n),f(k)\}$.  By induction on $n$, this implies $f(n) \leq r$ for all $n$, where $r = \max_{1 \leq i \leq k} f(i)$.\\

Let $N$ be a normal subgroup of index at most $n$, where $n \geq 2$.  Let $l(n)$ be the greatest integer such that $\St_G(l(n))$ has index less than $n$.  Then $N$ is not properly contained in $\St_G(l(n)+1)$, so it contains $\St_G(l(n)+1+f(l(n)+1))$, and hence $\Ob_G(N)$ contains $\St_G(l(n)+1+2f(l(n)+1))$, which in particular contains $\St_G(l(n)+2r+1)$.  Hence $\OI_n(G)$ contains $\St_G(l(n)+2r+1)$.  This means that
\begin{align*}
\ob_G(n) &\leq |G:\St_G(l(n))||\St_G(l(n)):\St_G(l(n)+2r+1)| \\
&\leq n|\St_G(l(n)):\St_G(l(n)+2r+1)|.
\end{align*}
By applying the self-reproducing property of $G$ repeatedly, we obtain an embedding 
\[ \frac{\St_G(l(n))}{\St_G(l(n)+2r+1)} \hookrightarrow \frac{\St_G(t)}{\St_G(t+2r+1)} \times \dots \times \frac{\St_G(t)}{\St_G(t+2r+1)},\]
where $t$ is the integer in the interval $(0,k]$ such that $l(n) \equiv t$ modulo $k$, and the direct factors on the right correspond to the vertices of $T$ of norm $l(n)$ descending from a given vertex of norm $t$.  Since $G$ is spherically transitive, there are less than $n$ vertices of $T$ of norm $l(n)$, so that
\[ \ob_G(n) \leq n(\max_{0 < t \leq k} |\St_G(t):\St_G(t+2r+1)|)^n \]
from which the result follows.\end{proof}

We conclude this section by proving Theorem C.

\begin{lem}Let $G$ be a profinite group, and let $H$ be a subgroup of $G$ of index $h$.  Then:

(i) $\Ilhd_n(G) \geq \Ilhd_n(H) \geq \Ilhd_{tn^h}(G)$, where $t=|G:\Core_G(H)|$;

(ii) If $G$ is just infinite, then $\Ilhd_{hn}(G) \geq \Ilhd_n(H)$ for sufficiently large $n$.\end{lem}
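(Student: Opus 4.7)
For the first inequality of (i), I would observe that if $N \lhd_o G$ with $|G:N| \leq n$, then $N \cap H \lhd H$ and $|H:N \cap H| = |NH:N| \leq |G:N| \leq n$, so $\Ilhd_n(H) \leq N \cap H \leq N$; intersecting over all such $N$ yields $\Ilhd_n(H) \leq \Ilhd_n(G)$.

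For the second inequality of (i), I would take any $L \lhd_o H$ with $|H:L| \leq n$ and put $K = \Core_G(H)$, which is normal in $G$ of index $t$. Then $L \cap K$ is normal in $H$ and $|K : L \cap K| = |LK:L| \leq n$. Since $H$ normalises both $L$ and $K$, it normalises $L \cap K$, so there are at most $|G:H|=h$ distinct $G$-conjugates of $L \cap K$, all contained in $K$ and each of index at most $n$ in $K$. Their intersection $\Core_G(L \cap K)$ is therefore a normal subgroup of $G$ of index at most $n^h$ in $K$, and hence at most $tn^h$ in $G$; being contained in $L$, it shows $\Ilhd_{tn^h}(G) \leq L$. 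Intersecting over such $L$ gives the bound.

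For (ii) the crucial tool is Theorem A: since $G$ is just infinite, the set $\mcK_H = \{N \lhd_o G \mid N \not\leq H\}$ is finite. For each $N \in \mcK_H$ the subgroup $N \cap H$ is open in $H$, so there exists $n_N$ with $\Ilhd_{n_N}(H) \leq N \cap H \leq N$; put $n_0 = \max_{N \in \mcK_H} n_N$. Then for any $n \geq n_0$ and any $N \lhd_o G$ with $|G:N| \leq hn$, a case split handles everything: if $N \leq H$ then $|H:N| = |G:N|/h \leq n$ and $N$ is itself a normal subgroup of $H$ of index $\leq n$, so $\Ilhd_n(H) \leq N$; otherwise $N \in \mcK_H$ and the monotonicity $\Ilhd_n(H) \leq \Ilhd_{n_0}(H) \leq N$ finishes the case. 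Intersecting over such $N$ yields $\Ilhd_n(H) \leq \Ilhd_{hn}(G)$. The main obstacle is (ii); without just-infiniteness, the family of normal subgroups of $G$ not contained in $H$ could be infinite with ever-smaller indices, defeating any uniform threshold $n_0$, so Theorem A is precisely what supplies the dichotomy between subgroups lying inside $H$ and a finite exceptional family outside.
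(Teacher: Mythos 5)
Your proposal is correct and follows essentially the same route as the paper: part (i) is argued identically (intersecting with $H$ for the first inequality, and taking the core of $L \cap \Core_G(H)$, with at most $h$ conjugates each of index at most $n$, for the second). For part (ii) you and the paper both rely on the just infinite property (Theorem A) to reduce to the finitely many normal subgroups of $G$ not contained in $H$ and then choose a uniform threshold; the only cosmetic difference is that you handle the exceptional subgroups via $N \cap H$ and monotonicity of $\Ilhd_n(H)$, while the paper picks inside each one a $G$-normal subgroup of bounded index lying in $H$.
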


\begin{proof}
(i) If $K$ is a normal subgroup of $G$ of index at most $n$, then $H \cap K$ is a normal subgroup of $H$ of index at most $n$.  On the other hand, let $L$ be a normal subgroup of $H$ of index at most $n$.  Then $M = L \cap \Core_G(H)$ has index at most $n$ in $\Core_G(H)$, and $M$ has at most $h$ conjugates in $G$, all of which are contained in $\Core_G(H)$, so that $\Core_G(M)$ has index at most $n^h$ in $\Core_G(H)$, and hence index at most $tn^h$ in $G$.  Thus every normal subgroup of $H$ of index at most $n$ contains a normal subgroup of $G$ of index at most $tn^h$.\\

(ii) If $G$ is just infinite, there is some integer $m_1$ such that $H$ contains every normal subgroup of $G$ of index at least $hm_1$; furthermore, there is some $m_2$ such that any normal subgroup of $G$ of index less than $hm_1$ contains a normal subgroup of $G$ of index at least $hm_1$, but at most $hm_2$.  The claimed equality holds for any $n \geq m_2$.\end{proof}

\begin{proof}[Proof of Theorem C]For part (i), we may assume that $n$ is large enough that $\Ob_G(H) \geq \Ilhd_{hn}(G) \geq \Ilhd_n(H)$.  The claimed inequalities are demonstrated by the relationships between subgroups given below:
\begin{align*}\OI_{hn}(G) &= \Ilhd_{hn}(G) \cap \bigcap \{ N \unlhd_o G \mid N \not\le \Ilhd_{hn}(G) \} \\
&\geq \Ilhd_{hn}(G) \cap \bigcap \{ N \unlhd_o H \mid N \not\le \Ilhd_{hn}(G) \} \cap \Ob_G(H) \\
&\geq \Ilhd_n(H) \cap \bigcap \{ N \unlhd_o H \mid N \not\le \Ilhd_n(H) \} \\
&= \OI_n(H).\\
\OI^*_n(H) &= \Ilhd_n(H) \cap \bigcap \{ L \leq_o H \mid \Ilhd_n(H) \leq N_H(L), \; L \not\le \Ilhd_n(H) \} \\
&\geq \Ilhd_{tn^h}(G) \cap \bigcap \{ L \leq_o G \mid \Ilhd_{tn^h}(G) \leq N_G(L), \; L \not\le \Ilhd_{tn^h}(G) \} \\
&= \OI^*_{tn^h}(G).\\
\OI^*_n(G) &= \Ilhd_n(G) \cap \bigcap \{ H \leq_o G \mid \Ilhd_n(G) \leq N_G(H), \; H \not\le \Ilhd_n(G) \} \\
&\geq \bigcap_{L \in \mcI_n} \Ilhd_n(L) \cap \bigcap_{L \in \mcI_n} \bigcap \{ H \unlhd_o L \mid H \not\le \Ilhd_n(L) \}\\
&= \bigcap_{L \in \mcI_n} \OI^*_n(L).\end{align*}\end{proof}

\section{Isomorphism types of normal sections and open subgroups}

\begin{prop}Let $G$ be a just infinite profinite group.  Let $M$ and $N$ be open normal subgroups of $G$ such that $N \leq M$, and let $H$ be an open subgroup of $G$, with $\Core_G(H)$ of index $h$.  Then at least one of the following holds:\\

(i) $M/N$ is abelian;\\

(ii) $H$ contains both $M$ and the centraliser of $M/N$;\\

(iii) $M$ contains the open subgroup $\Ob_G(\Ob_G(H))$, and so $|G:M| \leq \ob_G(\ob_G(h))$.
\end{prop}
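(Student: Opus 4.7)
The plan is to assume both (i) and (ii) fail and deduce (iii). Set $C = C_G(M/N) := \{g \in G \mid [g,M] \le N\}$; since $M/N$ is finite and $C$ is the kernel of the conjugation map $G \to \Aut(M/N)$, $C$ is an open normal subgroup of $G$. The non-abelian assumption on $M/N$ translates directly into the fact $M \not\le C$ --- this is the key geometric observation that will force something to lie outside $H$.

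Next I would record a short monotonicity lemma: if $H_1 \le H_2$ are open subgroups, then $\Ob_G(H_1) \le \Ob_G(H_2)$. (Indeed any open normal $K$ with $K \not\le H_2$ satisfies $K \not\le H_1$, so the intersection defining $\Ob_G(H_1)$ is at least as large on the ``$H$'' side and has more factors on the normal-subgroup side, but intersected with the smaller $H_1$ --- a quick check shows the containment.) This gives the standard fact that whenever $K$ is an open normal subgroup with $K \not\le H$, we have $\Ob_G(H) \le K$, because $K$ appears in the defining intersection.

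Now split into cases according to which part of (ii) fails. If $M \not\le H$, then directly $\Ob_G(H) \le M$, hence $\Ob_G(\Ob_G(H)) \le M$, which is (iii). Otherwise $M \le H$ but $C \not\le H$; then $\Ob_G(H) \le C$. Setting $L = \Ob_G(H)$, we now claim $M \not\le L$: for if $M \le L$, then $M \le L \le C$, forcing $M/N$ to be abelian, contradicting the failure of (i). Since $M$ is an open normal subgroup of $G$ not contained in $L$, we again conclude $\Ob_G(L) \le M$, i.e.\ $\Ob_G(\Ob_G(H)) \le M$.

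Finally, for the numerical bound: $\Core_G(H)$ has index $h$, so $\Ilhd_h(G) \le \Core_G(H) \le H$. Combined with monotonicity of $\Ob_G$ this gives $|G : \Ob_G(H)| \le |G : \OI_h(G)| = \ob_G(h)$. Applying the same reasoning to $L = \Ob_G(H)$ in place of $H$ (its core has index at most $\ob_G(h)$, since $L$ is normal --- indeed $L$ is itself normal as an intersection of normal subgroups with $H$... actually $L \le H$ need not be normal, but $L$ contains $\Ilhd_{\ob_G(h)}(G)$ by the same argument, so its core in $G$ has index at most $\ob_G(h)$), and using monotonicity of $\ob_G$ in $n$ (immediate from $\Ilhd_m(G) \ge \Ilhd_n(G)$ for $m \le n$), we obtain $|G:M| \le |G : \Ob_G(L)| \le \ob_G(\ob_G(h))$.

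The only subtle point is making the bookkeeping of indices tight in the last paragraph; everything else is a straightforward case split built on the single observation $M \not\le C$.
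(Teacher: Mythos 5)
Your proof is correct and takes essentially the same route as the paper: assume (i) and (ii) fail, note the centraliser $C$ of $M/N$ is open normal and does not contain $M$, split on whether $M\le H$, and in either case conclude $M\not\le \Ob_G(H)$ so that $\Ob_G(\Ob_G(H))\le M$. The paper leaves the index bookkeeping implicit, and your final paragraph supplies it correctly via $\Ob_G(H)\ge \OI_h(G)\ge \Ilhd_{\ob_G(h)}(G)$ and monotonicity of $\Ob_G$ (the momentary wobble about whether $L=\Ob_G(H)$ is normal is harmless, since that chain is all you need).
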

 
\begin{proof}Assume (i) and (ii) are false.  Since $M$ is a normal subgroup of $G$, to demonstrate (iii) it suffices to prove that $M$ is not properly contained in $\Ob_G(H)$.  Let $K$ be the centraliser of $M/N$ in $G$; note that since (i) is false, $K$ does not contain $M$.  If $H$ does not contain $M$, then $M$ contains $\Ob_G(H)$, so we may assume $H$ contains $M$.  It now follows that $H$ does not contain $K$, by the assumption that (ii) is false.  Since $K$ is normal in $G$, it must contain $\Ob_G(H)$, and hence $\Ob_G(H)$ cannot contain $M$.\end{proof}

\begin{proof}[Proof of Theorem D](i) We may assume that $G$ is not a residually-$\mcA$ group, so $O^\mcA(G)$ has finite index.  Let $M$ and $N$ be normal subgroups such that $M/N$ is a $\mcF$-group, and let $H=C_G(M/N)$.  Then $G/H$ is a $\mcA$-group, and hence an image of $G/O^\mcA(G)$.  On the other hand, $H$ does not contain $M$.  By the above proposition, this means that $|G:M|$ is bounded by a function of $G$ and $|G/O^\mcA(G)|$, and hence there are only finitely many possibilities for $M$.  This means that for some open normal subgroup $M$, there must be infinitely many images of $M$ that are $\mcF$-groups.  Hence $O^\mcF(M)$ is a normal subgroup of $G$ of infinite index, and hence trivial, so that $M$ is residually-$\mcF$.\\

(ii) Since $G$ is not virtually abelian, $H'$ is an open normal subgroup of $G$.  Since $G$ is a hereditary $\Phi^\lhd$-group, it follows that $\Phi^\lhd(H')$ is a proper normal subgroup of $H'$ of finite index, so $H/\Phi^\lhd(H')$ is finite and non-abelian.  The result follows by part (i) applied to $\mcF = [H/\Phi^\lhd(H')]$.\end{proof}

\paragraph{Remark} Part (i) of the above proposition does not extend to abelian sections.  Indeed, given any positive integer $n$ and a prime $p$, then any just infinite pro-$p$ group has infinitely many abelian normal sections of order $p^n$; this is clear for $\bZ_p$, and for any non-nilpotent pro-$p$ group $G$ one can take suitable sections inside $\gamma_k(G)/\gamma_{2k}(G)$ for any $k \geq n$.

\begin{cor}\label{vaiso}Let $G$ be a just infinite profinite group.  Then $G$ has infinitely many isomorphism types of open normal subgroup if and only if $G$ is not virtually abelian.\end{cor}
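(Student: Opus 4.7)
The plan is to establish the two directions separately, since they require different tools: Theorem D(ii) together with pigeonhole for the ``not virtually abelian $\Rightarrow$ infinitely many iso types'' direction, and a direct structural argument via integral representations for the converse.

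For the first direction, suppose $G$ is not virtually abelian. Since $G$ is infinite profinite, it has infinitely many distinct open normal subgroups (the open normal subgroups form a neighbourhood basis of $1$ and intersect trivially, so if there were only finitely many their intersection would be open, contradicting infiniteness of $G$). If only finitely many iso types occurred, pigeonhole would produce a class $[H]$ with infinitely many open normal representatives in $G$; discarding $G$ itself leaves infinitely many proper $K \unlhd_o G$ with $K \cong H$. Picking any one such $K_0$ and applying Theorem D(ii) with $H := K_0$ yields the required contradiction, since isomorphic groups $K$ satisfy $K/\Phi^\lhd(K') \cong K_0/\Phi^\lhd(K_0')$.

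For the converse, suppose $G$ is virtually abelian. Then, as remarked after Theorem E, $G$ has an open normal subgroup $A \cong \bZ_p^d$ for some prime $p$ and some $d \geq 1$; set $F = G/A$, a finite group. For any open normal $N \unlhd_o G$, the intersection $A_N := N \cap A$ is an open $\bZ_p[F]$-sublattice of $A$ (as both $A$ and $N$ are $G$-normal), and $N/A_N \cong NA/A \hookrightarrow F$. The isomorphism type of $N$ as an abstract group is controlled by the triple consisting of (a) the isomorphism class of $A_N$ as a $\bZ_p[F]$-module, (b) the subgroup $NA/A \leq F$, and (c) the extension class $[N] \in H^2(N/A_N, A_N)$. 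Each coordinate ranges over a finite set: the number of subgroups of type (b) is at most $2^{|F|}$; for (a), every $A_N$ is a $\bZ_p[F]$-lattice in the fixed $\bQ_p[F]$-module $A \otimes_{\bZ_p} \bQ_p$, so by the Jordan--Zassenhaus theorem there are only finitely many iso classes; and (c) ranges over $H^2$ of a finite group with coefficients in a finitely generated $\bZ_p$-module, which is itself finite. Since two extensions with the same triple give isomorphic groups, only finitely many iso types of $N$ arise.

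The main obstacle lies in the second direction: organising the iso type of $N$ into a finite tuple of invariants and invoking Jordan--Zassenhaus to bound the integral-lattice piece. Once that framework is in place, the finiteness of each coordinate is routine.
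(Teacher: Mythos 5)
Your proof is correct, but it is a hybrid: the ``not virtually abelian $\Rightarrow$ infinitely many isomorphism types'' direction is exactly the paper's argument (infinitely many open normal subgroups plus Theorem D(ii), noting that $K\mapsto K/\Phi^\lhd(K')$ is isomorphism-invariant), whereas your converse direction is genuinely different from, and much heavier than, the paper's. The paper argues in one line: since $G$ is just infinite and virtually abelian it has an open normal subgroup $V$ that is free abelian pro-$p$ of finite rank, every open subgroup of $G$ contained in $V$ is isomorphic to $V$, and by Theorem A all but finitely many open normal subgroups of $G$ lie inside $V$; hence only finitely many isomorphism types occur. You instead classify each open normal subgroup $N$ by the triple (lattice $N\cap A$ up to $\bZ_p[F]$-isomorphism, image $NA/A\leq F$, extension class in $H^2$), using Jordan--Zassenhaus over $\bZ_p$ (equivalently Maranda's theorem) and finiteness of $H^2$ of a finite group with finitely generated $\bZ_p$-coefficients. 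This is correct, with one routine point worth tightening: ``two extensions with the same triple give isomorphic groups'' should be phrased by first fixing a finite set of representative lattices and transporting the cohomology class along a chosen module isomorphism, since as written the class lives in a group depending on $N$. What your route buys is generality and independence from Theorem A: it shows that \emph{any} profinite group with an open normal subgroup isomorphic to $\bZ_p^d$ has only finitely many isomorphism types of open normal subgroups, with just-infiniteness used only (as in the paper) to reduce to the virtually free abelian pro-$p$ case; what the paper's route buys is brevity, staying entirely within the tools already developed (Theorem A), and avoiding integral representation theory altogether.
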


\begin{proof}Suppose $G$ is virtually abelian.  Then $G$ has an open normal subgroup $V$ which is a free abelian pro-$p$ group.  Every open subgroup of $G$ contained in $V$ is isomorphic to $V$, and by Theorem A, all but finitely many open normal subgroups of $G$ are contained in $V$.  Hence $G$ has only finitely many isomorphism types of open normal subgroup.  The converse follows from part (ii) of Theorem D.\end{proof}

Recall the definition of index-stability given in the introduction.  This property, or the absence of it, is important for understanding the commensurators of just infinite profinite groups, in the sense of \cite{BEW}.\\

A \emph{virtual automorphism} of a profinite group $G$ is an isomorphism between two open subgroups.  Two virtual automorphisms are regarded as equivalent if they agree on an open subgroup of $G$.  Composition of virtual automorphisms is defined up to equivalence by composing suitable equivalence class representatives.  Under this composition, the set of equivalence classes of virtual automorphisms of $G$ forms an abstract group, the \emph{commensurator} $\Comm(G)$ of $G$.  The commensurator depends only on the commensurability class of $G$.\\

Let $H$ and $K$ be isomorphic open subgroups of $G$.  Given an isomorphism $\theta$ from $H$ to $K$, write $\iota(\theta)$ for $|G:H|/|G:K|$.  This is clearly invariant under equivalence, so $\iota(\phi)$ is defined for $\phi \in \Comm(G)$ as $\iota(\theta)$ for any $\theta$ representing $\phi$.  This defines a function $\iota$ from $\Comm(G)$ to the multiplicative group $\bQ^\times_+$ of positive rationals, which we call the \emph{index ratio}.  Let $VZ(G)$ be the union of the finite conjugacy classes of $G$.  In \cite{BEW}, a topology (called the strong topology) is defined on $\Comm(G)$ so that it becomes a topological group $\Comm(G)_S$, and it is shown that if $VZ(G)=1$, then $\Comm(G)_S$ is locally compact, and the index ratio is in fact the modular function for this topology.  For just infinite profinite groups, it can easily be seen that $VZ(G)=1$ if and only if $G$ is not virtually abelian.\\

So Question \ref{indexq} from the introduction is equivalent to the following question:\\

Let $G$ be a (hereditarily) just infinite profinite group which is not virtually abelian.  Is the index ratio of $\Comm(G)$ (or equivalently the modular function of $\Comm(G)_S$) necessarily trivial?\\

This appears to be a difficult question in general, in either form.  First, some general comments about the index ratio.

\begin{lem}\label{irhom}Let $G$ be a profinite group.  Then the index ratio $\iota$ is a homomorphism of abstract groups from $\Comm(G)$ to $\bQ^\times_+$.  In particular, if $G$ is index-unstable, then for all $k \in \bN$ there exists $\phi \in \Comm(G)$ such that $\iota(\phi) > k$.\end{lem}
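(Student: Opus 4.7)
The plan is to unwind the definition of composition in $\Comm(G)$ and verify multiplicativity of $\iota$ by a direct index calculation, then deduce the second assertion by taking powers.

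First I would fix representatives: given $\phi, \psi \in \Comm(G)$, represent them by isomorphisms $\theta \colon H_1 \to K_1$ and $\sigma \colon H_2 \to K_2$ of open subgroups of $G$. The standard recipe for composing virtual automorphisms is to restrict to the open subgroup $L := K_1 \cap H_2$ of $G$: then $\theta^{-1}(L)$ is an open subgroup of $H_1$, and $\sigma \circ \theta$ restricts to an isomorphism $\theta^{-1}(L) \to \sigma(L)$ whose equivalence class represents $\psi\phi$. So $\iota(\psi\phi) = |G : \theta^{-1}(L)| / |G : \sigma(L)|$.

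The key computation is then purely multiplicative. Using that $\theta$ carries $\theta^{-1}(L)$ isomorphically onto $L$, we get $|G : \theta^{-1}(L)| = |G : H_1| \cdot |K_1 : L|$; similarly $|G : \sigma(L)| = |G : K_2| \cdot |H_2 : L|$. The tower law applied to $L \le K_1 \le G$ and $L \le H_2 \le G$ gives $|G : K_1| \cdot |K_1 : L| = |G : L| = |G : H_2| \cdot |H_2 : L|$, so $|K_1 : L| / |H_2 : L| = |G : H_2| / |G : K_1|$. Substituting, the ratio collapses to $(|G : H_1|/|G : K_1|)(|G : H_2|/|G : K_2|) = \iota(\phi)\iota(\psi)$. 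This also makes it transparent that $\iota$ does not depend on the chosen representatives, since shrinking $H_i$ to an open subgroup multiplies both numerator and denominator by the same index.

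For the final clause, index-unstability supplies an isomorphism $\theta \colon H \to K$ of open subgroups of $G$ with $|G:H| \neq |G:K|$, i.e.\ some $\phi \in \Comm(G)$ with $r := \iota(\phi) \neq 1$. Replacing $\phi$ by $\phi^{-1}$ if necessary we may assume $r > 1$, and then $\iota(\phi^n) = r^n \to \infty$ as $n \to \infty$, so $\iota(\phi^n) > k$ for all sufficiently large $n$. The only mildly delicate point in the whole argument is checking that the composition recipe really produces an isomorphism between open subgroups of $G$ (so that $\iota$ is defined on $\psi\phi$), but this is immediate from $\theta$ being a homeomorphism and $L$ being open in $G$; the rest is bookkeeping of indices.
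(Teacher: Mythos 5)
Your proof is correct and takes essentially the same approach as the paper: the index ratio of a composite telescopes via the tower law, and the second assertion follows by taking powers of a single $\phi$ with $\iota(\phi)\neq 1$. The only difference is cosmetic --- you carry out explicitly the restriction to $L = K_1 \cap H_2$ that the paper absorbs into choosing representatives for which the composition is already defined.
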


\begin{proof}Let $\phi,\psi \in \Comm(G)$, and let $\phi'$ and $\psi'$ be representatives of $\phi$ and $\psi$ respectively such that the composition $\phi' \psi'$ is defined.  Let $H$ be the domain of $\phi'$.  Then
\[\iota(\phi\psi)= \frac{|G:H|}{|G:H^{\phi'\psi'}|}=\frac{|G:H|}{|G:H^{\phi'}|}\frac{|G:H^{\phi'}|}{|G:H^{\phi'\psi'}|} = \iota(\phi)\iota(\psi).\]
The conclusions are now clear.\end{proof}

Write $H \unlhd^2_o G$ to indicate that $H$ is an open subnormal subgroup of $G$ of defect at most $2$.

\begin{lem}\label{subnorlem}Let $G$ be a profinite group.  Let $H$ and $K$ be open subgroups of $G$, and suppose $\theta$ is an isomorphism from $H$ to $K$.  Then there are subgroups $H^* \leq H$ and $K^* \leq K$, with $H^* \unlhd^2_o G$ and $K^* \unlhd_o G$, such that the restriction of $\theta$ to $H^*$ induces an isomorphism from $H^*$ to $K^*$.\end{lem}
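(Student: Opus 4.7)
The plan is a two-stage coring procedure that alternates sides along $\theta$. The main obstacle is that neither $H$ nor $K$ is itself normal in $G$, so a single application of $\Core_G(\cdot)$ on one side cannot simultaneously make the source subnormal in $G$ and the target normal in $G$: the image under $\theta$ of an open normal subgroup of $G$ contained in $H$ need not be normal in $G$. One cannot, therefore, simply set $K^* := \Core_G(K)$ and $H^* := \theta^{-1}(K^*)$, nor proceed analogously from the $H$-side. The fix is to core once on each side.

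In the first stage I would manufacture an open normal subgroup of $G$ contained in $H$. Set $C := \Core_G(K)$, which is open normal in $G$ with $C \leq K$; transport via $\theta$ to $C' := \theta^{-1}(C)$, which is open in $H$ and satisfies $C' \unlhd H$ because $C \unlhd K$; then core again to get $D := \Core_G(C')$, an open normal subgroup of $G$ with $D \leq C' \leq H$. The group $D$ is close to what we want for $H^*$, but its image $\theta(D) \leq C \leq K$ need not be normal in $G$, so we cannot yet take $K^* := \theta(D)$.

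In the second stage I would correct the image side. Set $E := \Core_G(\theta(D))$, an open normal subgroup of $G$ contained in $\theta(D)$, and define $K^* := E$ and $H^* := \theta^{-1}(E)$. By construction $K^* \unlhd_o G$, and $\theta$ restricts to an isomorphism $H^* \to K^*$; the inclusion $E \leq \theta(D)$ forces $H^* \leq D$. For the defect bound, $K^* \unlhd G$ implies $K^* \unlhd K$, so transporting via $\theta$ gives $H^* \unlhd H$; since $D \leq H$, each element of $D$ normalises $H^*$, and hence $H^* \unlhd D \unlhd G$, i.e.\ $H^* \unlhd^2_o G$.

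The remaining verifications — that $C, C', D, E, H^*$ and $K^*$ are all open in $G$ — are routine, since the core of an open subgroup of a profinite group is open (as a finite intersection of open conjugates) and $\theta$ is a homeomorphism between the open subgroups $H$ and $K$. The only conceptual step is recognising the need for the second coring to normalise on the $K$-side after having worked normality into the $H$-side; beyond that the proof is bookkeeping.
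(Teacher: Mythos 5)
Your proof is correct and is essentially the paper's argument: produce an open normal subgroup of $G$ inside $H$, push it through $\theta$, core the image in $G$ to get $K^*$, pull back to get $H^*$, and obtain defect $2$ because normality of $K^*$ transports along $\theta$ to give $H^*$ normal in an open normal subgroup of $G$. The only difference is that your first stage (coring $K$, pulling back, and coring again to get $D$) is redundant — the paper simply takes $\Core_G(H)$ in the role of your $D$, and your second stage then goes through verbatim.
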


\begin{proof}Let $H_2$ be the core of $H$ in $G$, and let $K_2$ be its image under $\theta$.  Now let $K^*$ be the core of $K_2$ in $G$, and let $H^*$ be its preimage under $\theta$.  By construction, $K^*$ is normal in $G$, and hence normal in $K_2$.  Since $\theta$ maps $H_2$ isomorphically to $K_2$, this means that $H^*$ must be the corresponding normal subgroup of $H_2$.  But $H_2$ is normal in $G$, so $H^* \unlhd^2_o G$.\end{proof}

\paragraph{Definition}Let $G$ be a just infinite profinite group that is not virtually abelian, and let $N$ be an open normal subgroup of $G$.  We define the following invariant of $G$:
\[ j_N(G) = \frac{\inf \{|G:M| \mid M \cong N, \; M \unlhd_o G\}}{\inf \{|G:M| \mid M \cong N, \; M \unlhd^2_o G\}}.\]\\

Clearly, if $G$ is index-stable then $j_N(G)=1$ for all $N \unlhd_o G$.  In fact, there is a strong converse to this statement.

\begin{prop}\label{jbdprop}Let $G$ be a just infinite profinite group.  Suppose that there are infinitely many isomorphism types of open normal subgroup $N$ of $G$ for which $j_N(G) \leq k$, for some constant $k$.  Then $G$ is index-stable.\end{prop}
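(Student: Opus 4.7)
The plan is to prove the contrapositive: assuming $G$ is index-unstable, we show that $\{N : j_N(G) \leq k\}$ is finite for every $k$. By Lemma \ref{irhom}, the index ratio $\iota : \Comm(G) \to \bQ^\times_+$ is a nontrivial homomorphism; since $\bQ^\times_+$ is torsion-free, its image is an infinite, hence unbounded, subgroup. Fix $\phi \in \Comm(G)$ with $\iota(\phi) = r > k$. By Lemma \ref{subnorlem}, $\phi$ is represented by an isomorphism $\theta : H^* \to K^*$ with $H^* \unlhd^2_o G$ and $K^* \unlhd_o G$; the index ratio is preserved under this restriction (since $|H:H^*| = |K:K^*|$), so $|G:H^*|/|G:K^*| = r$.

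The key observation is that if $M \unlhd_o G$ is contained in $H^*$, then $\theta(M) \unlhd^2_o G$ with $\theta(M) \cong M$ and $|G:\theta(M)| = |G:M|/r$. Indeed, $M \unlhd G$ together with $M \leq H^*$ gives $M \unlhd H^*$; since $\theta$ is an isomorphism onto $K^*$, we obtain $\theta(M) \unlhd K^* \unlhd G$, giving defect at most $2$. The index calculation follows from $|K^*:\theta(M)| = |H^*:M|$.

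Now suppose $N$ is any isomorphism type of open normal subgroup of $G$ with $j_N(G) \leq k$, and let $M_N \unlhd_o G$ realize $N$ with minimum index (which exists as the relevant set of indices is a non-empty subset of $\bN$). If $M_N \leq H^*$, the previous paragraph produces an open subnormal-defect-$2$ subgroup of $G$ of type $N$ and index $|G:M_N|/r$, so
\[ j_N(G) \geq \frac{|G:M_N|}{|G:M_N|/r} = r > k, \]
contradicting $j_N(G) \leq k$. Hence $M_N \not\leq H^*$, i.e., $M_N \in \mcK_{H^*}$. By Theorem A, $\mcK_{H^*}$ is finite, and since distinct $N$ yield non-isomorphic $M_N$, the map $N \mapsto M_N$ is injective. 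Therefore $\{N : j_N(G) \leq k\}$ is finite, contradicting the hypothesis.

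The main obstacle is the defect bookkeeping: we need $\theta(M)$ to be subnormal of defect at most $2$ (rather than higher) so that it is counted by the denominator of $j_N(G)$. The precise conclusion of Lemma \ref{subnorlem}, that $H^* \unlhd^2_o G$ while $K^*$ is actually normal in $G$, is exactly what makes this work; once this is in place, the remainder of the argument is a finiteness count via Theorem A.
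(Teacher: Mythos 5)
Your proof is correct and follows essentially the same route as the paper's: Lemma \ref{irhom} plus Lemma \ref{subnorlem} produce an isomorphism $\theta$ from a defect-$2$ subnormal subgroup onto a normal subgroup with index ratio exceeding $k$, and applying $\theta$ to a minimal-index normal representative forces $j_N(G) > k$, with Theorem A supplying the finiteness. The only difference is bookkeeping — you count minimal representatives landing in the finite set $\mcK_{H^*}$, while the paper picks a single isomorphism type all of whose normal copies lie in $H$ — which is a cosmetic reorganization of the same argument.
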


\begin{proof}By Corollary \ref{vaiso}, $G$ is not virtually abelian.  Suppose $G$ is index-unstable.  Then by Lemmas \ref{irhom} and \ref{subnorlem}, there are isomorphic subgroups $H$ and $K$ of $G$ such that $|G:H|/|G:K| > k$, and such that $H \unlhd^2_o G$ and $K \unlhd_o G$.  Now $H$ contains all but finitely many normal subgroups of $G$, so all but finitely many isomorphism types of open normal subgroups of $G$ occur only as subgroups of $H$.  This means that there is a normal subgroup $N$ of $G$ such that $j_N(G) \leq k$, and such that all normal subgroups of $G$ isomorphic to $N$ are subgroups of $H$; take $N$ to be of least possible index.  Then $N^\theta \unlhd^2_o G$, where $\theta$ is any isomorphism from $H$ to $K$, and $N^\theta$ is isomorphic to $N$.  Since $N$ was chosen to be of least possible index, it follows that $j_N(G) \geq \iota(\theta) > k$, a contradiction.\end{proof}

We now have enough information to prove Theorem E.

\begin{proof}[Proof of Theorem E]
(i) Suppose $G$ is not virtually abelian.  Then there is an open subgroup $N$ of $H$ such that $N$ is normal in $G$ but $N^\theta$ is not normal in $G$, as shown in the proof of Proposition \ref{jbdprop}.  But then $N$ is normal in $H$, and so $N^\theta \lhd H^\theta = G$, a contradiction.\\

(ii) Let $N=O_\mcX(G)$ for some class of groups $\mcX$, and suppose $N$ is non-trivial.  Let $M$ be a subnormal subgroup of $G$ isomorphic to $N$.  Then by definition, $M$ is generated by its subnormal $\mcX$-subgroups.  But these are then subnormal in $G$, and so contained in $N$.  Hence $M \leq N$, demonstrating that $j_N(G)=1$.  Hence the non-trivial radicals of $G$ form an infinite set of pairwise non-isomorphic open normal subgroups $N$ satisfying $j_N(G)=1$.  By Proposition \ref{jbdprop}, this ensures that $G$ is index-stable.\end{proof}

\section{Acknowledgments}This paper is based on results obtained by the author while under the supervision of Robert Wilson at Queen Mary, University of London (QMUL).  The author would also like to thank Charles Leedham-Green for introducing the author to the study of profinite groups, for his continuing advice and guidance in general and for his detailed feedback and corrections concerning this paper; Yiftach Barnea for his advice on the presentation of some of the proofs and for informing the author about the recent papers \cite{BGJMS} and \cite{BEW}; and Pavel Zalesskii for notifying the author of \cite{Zal}.  The author acknowledges financial support provided by EPSRC and QMUL for the duration of his doctoral studies.

\end{document}